\newtheorem{thm}{Theorem}[section]
\newtheorem{prop}[thm]{Proposition}
\newtheorem{lem}[thm]{Lemma}
\newtheorem{con}[thm]{Conjecture}
\theoremstyle{definition}
\newtheorem*{ack*}{Acknowledgement}
  \date{}
\title{Rational function approximations of the special function $e^{x}E_{1}(x)$ and applications to irrationality of Euler-Gompertz constant $\delta$}
\author{Naoki Murabayashi \and Hayato Yoshida}
\begin{document}
 \maketitle
\begin{abstract}
In \cite{d4}, we gave a method to construct a continued fraction of the function $F(x):=e^{x}E_{1}(x)$. More precisely we define $F_{1}(x)$ as the reciprocal of $F(x)$ and we inductively define $F_{m}(x)$ as the reciprocal of ``$F_{m-1}(x)$ minus the main term of $F_{m-1}(x)$ at infinity''. We calculated the main term of $F_{m}(x)$ at infinity by using \cite[Proposition 2.1]{d4}. This method is analogous to the regular continued fraction expansion of real numbers. \\
\ \ \ \  In this paper we prove that the continued fraction converges to $F(x)$ for any positive real number $x>0$ by following the proof of that the regular continued fraction of a positive and irrational real number $\alpha$ converges to $\alpha$. Essentially we prove inequalities for $Q_{m}(x)$ (in Theorem 4.1) and inequalities $F_{m}(x)>0$ (in Section 5). In particular, we prove stronger inequalities $\displaystyle\frac{P_{2k}(x)}{Q_{2k}(x)}<F(x)<\displaystyle\frac{P_{2k-1}(x)}{Q_{2k-1}(x)}$ (than $F_{m}(x)>0$) and give two proofs of these. In Section 6, we show an asymptotic relation between $Q_{2k}(x)$ and $Q_{2k-1}(x)$ by using properties of the classical Laguerre polynomial.  In Section 7, we consider Euler-Gompertz constant $\delta$. As far as we know, irrationality of $\delta$ is still an open problem. We construct a sequence of rationals $\displaystyle\frac{A_{i}}{B_{i}}\ (i=1,2,3,\cdots)$ such that $\delta B_{i}-A_{i}$ approaches 0 as $i$ approaches infinity and give a sufficient condition of that $\delta B_{i}-A_{i}\neq 0$ for any positive integer $i$. Therefore, if it is proved that this condition holds, it completes a proof of irrationality of Euler-Gompertz constant $\delta$.
 \end{abstract}
\section*{Keywords}
Continued fractions, Recurrence relation, Euler-Gomperts constant, Rational approximation
\section*{Declarations}
\textbf{Funding} The authors did not receive support from any organization for the submitted work.\\ \\
\textbf{Conflicts of interest/Competing interests} The authors have no conflicts of interest to declare that are relevant to the content of this article.\\ \\
\textbf{Availability of data and material} Not applicable.\\ \\
\textbf{Code availability} Not applicable.\\ \\
\textbf{Authors' contrinutions} All authors contributed to the study conception and design.\\ \\
\textbf{Ethics approval} Not applicable.\\ \\
\textbf{Consent to participate} Not applicable.\\ \\
\textbf{Consent for publication} Not applicable.
\newpage
\section{Introduction}
\ \ \ \ It is well known that the exponential integral $E_{1}(x)$ is defined by $\displaystyle\int_{x}^{\infty} \displaystyle\frac{e^{-t}}{t}dt$. Then we define the special function
$$F(x):=e^{x}E_{1}(x)=\displaystyle\int_{0}^{\infty} \displaystyle\frac{e^{-t} }{t+x} dt.$$
Let $\mathcal{O}$ be the Landau symbol. The following was shown in \cite[Proposition2.1]{d4}
 $$F(x)-\left(\sum_{k=1}^{n} (-1)^{k-1}(k-1)!\left(\frac{1}{x}\right)^{k}\right)=\mathcal{O}\left(\frac{1}{x^{n+1}}\right)\ \ \ \ (x\to \infty)$$
for any positive integer $n$. Using the identity obtained by setting $n=1$, we obtain that
$$\frac{F(x)}{\displaystyle\frac{1}{x}}=1+\frac{1}{x}\cdot \frac{\mathcal{O}\left(\displaystyle\frac{1}{x^{2}}\right)}{\displaystyle\frac{1}{x^{2}}}\rightarrow 1\ \ \ \ (x\to \infty).$$
Then, we see that $F(x)\sim x^{-1}$ at infinity.
 Putting $F_{1}(x):=\displaystyle\frac{1}{F(x)}$, we have that $F_{1}(x)\sim x$ at infinity. Next we put $F_{2}(x):=\displaystyle\frac{1}{F_{1}(x)-x}.$ 
Using the identity obtained by setting $n=1$ (resp. $n=2$) in the denominator (resp. the numerator), we have that
$$F_{1}(x)-x=\displaystyle\frac{1-xF(x)}{F(x)}=\displaystyle\frac{1-x^{2} \mathcal{O}\left(\displaystyle\frac{1}{x^{3}}\right)}{1+x\mathcal{O}\left(\displaystyle\frac{1}{x^{2}}\right)} \rightarrow 1\ \ \ \ (x\to \infty).$$ 
Then, we get that $F_{2}(x)\sim 1$ at infinity. We inductively define
$$F_{m}(x):=\frac{1}{F_{m-1}(x)-(\mbox{main term of}\ F_{m-1}(x)\ \mbox{at\ infinity})}.$$
\ \ \ \ In \cite{d4}, we proved that 
$$F_{2m-1}(x)\sim x,\ F_{2m}(x)\sim \frac{1}{m}\ \ \ \ (x\to \infty)$$
for any positive integer $m$. Thus, we obtain a continued fraction of $F(x)$ at infinity. \\
\ \ \ \ In this paper we prove that this continued fraction converges to $F(x)$. More precisely for any integer $n\ge 1$, we get polynomials $P_{n}(x)$, $Q_{n}(x)$ by truncating the continued fraction of $F(x)$, i.e., 

$$\displaystyle\frac{P_{n}(x)}{Q_{n}(x)}:=\cfrac{1}{c_{1}+\cfrac{1}{c_{2}+\cfrac{1}{c_{3}+\cfrac{1}{ \cfrac{\hspace{8mm} \rotatebox{-22}{$\ddots$} \raisebox{-3.5ex}[1ex]{1}  }{ c_{\scalebox{0.5} {n-2}}+\displaystyle\frac{\hspace{7mm} \raisebox{-1ex}[1ex]1}{c_{\scalebox{0.5} {n-1}}+\displaystyle\frac{\raisebox{-2ex}[1ex]1}{c_{\scalebox{0.5}n}}}}}}}},$$
where 
$$c_{m}:=
\begin{cases}
x\ \ \ \ \mbox{if}\ m:\mbox{odd}\vspace{3mm}\\
\displaystyle\frac{2}{m}\ \ \ \ \mbox{if}\ m:\mbox{even}
\end{cases}$$
\vspace{3mm}for $m=1, 2, \cdots, n.$ In the following for the sake of simplicity, we denote this finite continued fraction by 
$$\displaystyle\frac{1}{c_{1}}{ {} \atop +}\displaystyle\frac{1}{c_{2}}{ {} \atop +}{ {} \atop \cdots} { {} \atop +}\displaystyle\frac{1}{c_{n}}.$$
 By following the proof of that the regular continued fraction of a positive and irrational real number $\alpha$ converges to $\alpha$ (which is reviewed Section 2), we see that for our purpose it is enough to show $Q_{n}(x)>0$, $\displaystyle\lim_{n\to \infty} Q_{n}(x)=\infty$, and $F_{m}(x)>0$ for any real number $x>0$ and positive integers $m$, $n$ (explained in Section 3). By using an explicit expression of $Q_{n}(x)$ obtained in \cite[Theorem 5.1]{d4}, we see that $Q_{n}(x)>0$ (because of that all coefficients of $Q_{n}(x)$ are positive) and show certain inequalities for $Q_{n}(x)$ which imply $\displaystyle\lim_{n \to \infty} Q_{n}(x)=\infty$ (explained in Section 4). In Section 5 we show that
\begin{eqnarray*}
\displaystyle\frac{P_{2k}(x)}{Q_{2k}(x)}<F(x)<\displaystyle\frac{P_{2k-1}(x)}{Q_{2k-1}(x)}\ \ \ \ \mathrm{for}\ k=1,2,\cdots &\Longrightarrow& F_{m}(x)>0\ \ \ \ \mathrm{for}\ m=1,2,\cdots \\
&\Longleftrightarrow& F_{m}(x)>
\begin{cases}
0\ \ \ \ \ \mathrm{if}\ m:\mathrm{odd} \vspace{3mm}\\
\displaystyle\frac{2}{m}\ \ \ \mathrm{if}\ m:\mathrm{even}
\end{cases}
\ \mathrm{for}\ m=1,2,\cdots,
\end{eqnarray*}
and give two proofs of the first inequalities. In the first proof, we need the $k$ th derivative and an explicit expression of $P_{n}(x)$ and $Q_{n}(x).$ On the other hand, in the second proof, it is enough to use the first derivative and recurrence relations of $P_{n}(x)$ and $Q_{n}(x)$. Moreover, we do not need the expression of $c_{m}$ but the identities 
$$\displaystyle\frac{1}{c_{m+1}}=\displaystyle\frac{1}{c_{m-1}}+c^{'}_{m}\ \ \ \ \mathrm{for}\ m=2,3,\cdots$$
satisfied by $\{c_{m}\}$, where $c_{m}^{'}$ denotes the derivative of $c_{m}$ with respect to $x$. We seem that this fact has the potential for an another continued fraction of $F(x)$.\\
\ \ \ \ As a result, we have that
$$F(x)=\displaystyle\lim_{n\to \infty} \displaystyle\frac{P_{n}(x)}{Q_{n}(x)}$$
for any positive real number $x>0$ and we can evaluate error terms
$$\left|F(x)-\displaystyle\frac{P_{n}(x)}{Q_{n}(x)}\right|<
\begin{cases}
\displaystyle\frac{1}{Q_{n}(x)Q_{n-1}(x)}\ \ \ \ \mathrm{if}\ n:\mathrm{even},\vspace{3mm} \\
\displaystyle\frac{1}{\displaystyle\frac{2}{n+1}Q_{n}^{2}(x)+Q_{n}(x)Q_{n-1}(x)}\ \ \ \ \mathrm{if}\ n:\mathrm{odd}.
\end{cases}$$
\ \ \ \ We see the following asymptotic relation between $Q_{2k}(x)$ and $Q_{2k-1}(x)$ for any real number $x>0$:
$$Q_{2k-1}(x)\sim \sqrt{kx}Q_{2k}(x).$$
In order to show this, we use the following property of the classical Laguerre polynomial $L_{k}^{(\alpha)}(x)$:
$$L_{k}^{(\alpha)}(x)=\displaystyle\frac{1}{2\sqrt{\pi}}e^{\frac{x}{2}}(-x)^{-\frac{\alpha}{2}-\frac{1}{4}}k^{\frac{\alpha}{2}-\frac{1}{4}}e^{2\sqrt{-kx}}\left(1+\mathcal{O}\left(\displaystyle\frac{1}{\sqrt{k}}\right)\right)\ \ \ \ (k\to \infty).$$
This relation is introduced in \cite{b2}. Actually, we have that $Q_{2k}(x)=L_{k}^{(0)}(-x)$ and $Q_{2k-1}(x)=kL_{k}^{(-1)}(-x)$ (Lemma 6.1).\\ \\
\vspace{5mm}
\ \ \ \ Euler-Gompertz constant $\delta$ is defined by $\displaystyle\int_{0}^{\infty} e^{-t}\log{(t+1)}dt=\displaystyle\int_{0}^{\infty} \displaystyle\frac{e^{-t}}{t+1}dt=F(1)$. It is well known that Euler-Mascheroni constant $\gamma$ has a integral representation $-\displaystyle\int_{0}^{\infty} e^{-t}\log{t}\ dt.$ These constants have a similar integral representation. In \cite{a1} Alexander Aptekarev proved that at least one of them is irrational. This result was improved in \cite{e5} by Tanguy Rivoal. He proved that at least one of them is transcendental. By Theorem 5.2, we have that
$$\displaystyle\frac{P_{2k}(1)}{Q_{2k}(1)}<\delta<\displaystyle\frac{P_{2k-1}(1)}{Q_{2k-1}(1)}.$$
In Section 7, we construct a sequence of rationals $\displaystyle\frac{A_{i}}{B_{i}}\ (i=1,2,3,\cdots)$ such that $\delta B_{i}-A_{i}$ approaches 0 as $i$ approaches infinity. Since the sequence $\delta\{Q_{m}(1)\}-\{P_{m}(1)\}\ (m=1,2,3,\cdots)$ is bounded (where $\{x\}$ denotes the fractional part of a real number $x$), there exists a convergent subsequence $\delta\{Q_{m_{i}}(1)\}-\{P_{m_{i}}(1)\}\ (i=1,2,3,\cdots)$. Let $\alpha$ be the limit of this. Then we define
$$A_{i}:=[P_{m_{i+1}}(1)]-[P_{m_{i}}(1)],\ \ \ \ B_{i}:=[Q_{m_{i+1}}(1)]-[Q_{m_{i}}(1)]$$
for any positive integer $i$ (where $[x]$ denotes the integer part of a real number $x$). Since $\delta Q_{m_{i}}(1)-P_{m_{i}}(1)$ approaches 0 and $\delta\{Q_{m_{i}}(1)\}-\{P_{m_{i}}(1)\}$ approaches $\alpha$ as $i$ approaches infinity, then we obtain that
$$\delta B_{i}-A_{i}=\delta[Q_{m_{i+1}}(1)]-[P_{m_{i+1}}(1)]-(\delta[Q_{m_{i}}(1)]-[P_{m_{i}}(1)])\to -\alpha+\alpha=0\ \ \ \ (i\to \infty).$$
If $\delta \neq \displaystyle\frac{A_{i}}{B_{i}}$ for any positive integer $i$, Theorem 7.2 implies that Euler-Gompertz constant is irrational. At the end of this paper we give a sufficient condition of that $\delta \neq \displaystyle\frac{A_{i}}{B_{i}}$ for any positive integer $i$.

The continued fraction representation of $F(x)$ is a classical result and there are many related papers. So it is almost impossible to list them all. Therefore thinking that there are already literatures proving that the continued fraction converges to $F(x)$, we have not been able to identify them. One of the purposes of this paper is to clearly describe the origined proof of this convergence in order to apply the continued fraction of $F(1)$ to the proof of the irrationality of the Euler-Gompertz constant.
\section{Review of convergence of the regular continued fraction}
\ \ \ \ Let $\alpha_{0}$ be a positive real number. We put $\alpha_{0}=a_{0}+b_{0}$, where $a_{0}=[\alpha_{0}]$ is the integer part of $\alpha_{0}$ and $b_{0}$ is fractional part of $\alpha_{0}$. If $b_{0}\neq 0$, we can write $b_{0}=\displaystyle\frac{1}{\alpha_{1}}$, with $\alpha_{1}>1$. Hence,
$$\alpha_{0}=a_{0}+\frac{1}{\alpha}.$$
Again with $\alpha_{1}$, if $\alpha_{1}$ is not an integer, we can write $\alpha_{1}=a_{1}+\displaystyle\frac{1}{\alpha_{2}}$, with $a_{1}\ge 1$ and $\alpha_{2}>1$. Therefore,
$$\alpha_{0}=a_{0}+\frac{1}{a_{1}+\displaystyle\frac{1}{\alpha_{2}}}.$$
We can go on the same way as long as $\alpha_{n}$ is not an integer, and we get 
$$\alpha_{0}=a_{0}+\frac{1}{a_{1}}{ {} \atop +} \frac{1}{a_{2}}{ {} \atop +}{ {} \atop \cdots} { {} \atop +}\frac{1}{a_{n}}{ {} \atop +}\frac{1}{\alpha_{n+1}}$$
by using the following algorithm:
$$\alpha_{n}=a_{n}+\frac{1}{\alpha_{n+1}},\ \ \ \ a_{n}=[\alpha_{n}].\eqno(2.1)$$
We now assume that $\alpha_{0}$ is irrational. We consider the regular continued fraction 
$$a_{0}+\displaystyle\frac{1}{a_{1}}{ {} \atop +}\displaystyle\frac{1}{a_{2}} { {} \atop +}{ {} \atop \cdots} { {} \atop +}\displaystyle\frac{1}{a_{n}}{ {} \atop +\cdots},$$
defined by algorithm (2.1), and prove that it converges to $\alpha_{0}$. By \cite[Theorem 3.1]{c3}, we see that 
$$\displaystyle\frac{P_{n}}{Q_{n}}=a_{0}+\displaystyle\frac{1}{a_{1}} { {} \atop +}\displaystyle\frac{1}{a_{2}} { {} \atop +}{ {} \atop \cdots}{ {} \atop +} \displaystyle\frac{1}{a_{n}},$$
where $P_{n}$ and $Q_{n}$ satisfy
$$ \begin{cases}
P_{-1}=1,\ \ Q_{-1}=0,\ \ P_{0}=a_{0},\ \ Q_{0}=1,\\
P_{n}=a_{n}P_{n-1}+P_{n-2},\ \ Q_{n}=a_{n}Q_{n-1}+Q_{n-2}.
\end{cases}$$
We put
\begin{eqnarray*}
\alpha_{0}&=&a_{0}+\frac{1}{a_{1}}{ {} \atop +} \frac{1}{a_{2}}{ {} \atop +}{ {} \atop \cdots} { {} \atop +}\frac{1}{a_{n-1}}{ {} \atop +}\frac{1}{a_{n}} { {} \atop +} \frac{1}{\alpha_{n+1}} \\ \\
&=&a_{0}+\frac{1}{a_{1}}{ {} \atop +} \frac{1}{a_{2}}{ {} \atop +}{ {} \atop \cdots} { {} \atop +}\frac{1}{a_{n-1}}{ {} \atop +}\frac{1}{a_{n}+\displaystyle\frac{1}{\alpha_{n+1}}}=:\displaystyle\frac{P^{'}_{n}}{Q^{'}_{n}}.
\end{eqnarray*}
By recurrence relations, we have that 
\begin{eqnarray*}
P^{'}_{n}&=&\left( a_{n}+\displaystyle\frac{1}{\alpha_{n+1}}\right)P_{n-1}+P_{n-2}\\
&=&a_{n}P_{n-1}+P_{n-2}+\displaystyle\frac{1}{\alpha_{n+1}}P_{n-1}\\
&=&P_{n}+\displaystyle\frac{1}{\alpha_{n+1}}P_{n-1}.
\end{eqnarray*}
Similarly, we have that
$$Q^{'}_{n}=Q_{n}+\displaystyle\frac{1}{\alpha_{n+1}}Q_{n-1}.$$
So, we obtain that
$$\alpha_{0}=\displaystyle\frac{P_{n}+\displaystyle\frac{1}{\alpha_{n+1}}P_{n-1}}{Q_{n}+\displaystyle\frac{1}{\alpha_{n+1}}Q_{n-1}}=\frac{\alpha_{n+1}P_{n}+P_{n-1}}{\alpha_{n+1}Q_{n}+Q_{n-1}}.$$
Since $a_{n}$ are positive integer for every $n\ge1$, $P_{n}$ and $Q_{n}$ are also similar. Moreover, we have that $Q_{n}\ge Q_{n-1}$ since $Q_{n}=a_{n}Q_{n-1}+Q_{n-2}$. Therefore, we obtain that
$$Q_{n}= a_{n}Q_{n-1}+Q_{n-2}\ge a_{n}Q_{n-2}+Q_{n-2}\ge 2Q_{n-2}\ge 2^{2}Q_{n-4}\ge \cdots \ge 
\begin{cases}
2^{[\frac{n}{2}]}Q_{1}\ge 2^{[\frac{n}{2}]}\ \ \mathrm{if}\ n:\mathrm{odd},\vspace{3mm}\\
2^{[\frac{n}{2}]}Q_{0}= 2^{[\frac{n}{2}]}\ \ \mathrm{if}\ n:\mathrm{even}.
\end{cases}$$
Further we see that $P_{n}Q_{n-1}-P_{n-1}Q_{n}=(-1)^{n-1}$ by \cite[Theorem 3.2]{c3} (it can be proved by induction). Therefore, we have that
$$\alpha_{0}-\frac{P_{n}}{Q_{n}}=\frac{\alpha_{n+1}P_{n}+P_{n-1}}{\alpha_{n+1}Q_{n}+Q_{n-1}}-\frac{P_{n}}{Q_{n}}=\frac{-(P_{n}Q_{n-1}-P_{n-1}Q_{n})}{Q_{n}(\alpha_{n+1}Q_{n}+Q_{n-1})}=\frac{(-1)^{n}}{Q_{n}(\alpha_{n+1}Q_{n}+Q_{n-1})}.$$
This equation implies that $\alpha_{0}>\displaystyle\frac{P_{n}}{Q_{n}}$ if $n$ is even, and that $\alpha_{0}<\displaystyle\frac{P_{n}}{Q_{n}}$ if $n$ is odd. Hence, $\alpha_{0}$ always lies between two successive convergents. Moreover, since $Q_{n-1}>0$ and $\alpha_{n+1}>1$, we have that 
$$  \left| \alpha_{0}-\frac{P_{n}}{Q_{n}} \right|<\frac{1}{Q_{n}^{2}}.$$
Since $Q_{n}\ge2^{[\frac{n}{2}]}$, we have that $\displaystyle\lim_{n \to \infty} \displaystyle\frac{P_{n}}{Q_{n}}=\alpha_{0}$.
\section{Sufficient conditions for the continued fraction to converge to $F(x)$}
\ \ \ \ In \cite{4}, we inductively defined $F_{1}(x):=\displaystyle\frac{1}{F(x)}$ and
$$F_{n+1}(x)=\displaystyle\frac{1}{F_{n}(x)-(\mbox{main term of}\ F_{n}(x)\ \mbox{at\ infinity})},\eqno(3.1)$$
for any positive integer $n$ and proved in [2, Theorem 2.1] that
$$\mbox{``main term of }\ F_{n}(x)\ \mbox{at infinity''}=
\begin{cases}
x\ \ \ \ \mbox{if}\ n:\mbox{odd},\vspace{3mm}\\ 
\displaystyle\frac{2}{n}\ \ \ \ \mbox{if}\ n:\mbox{even},
\end{cases}$$
for any positive integer $n$. Since (3.1) is equivalent to 
$$F_{n}(x)=c_{n}+\displaystyle\frac{1}{F_{n+1}(x)},$$
if we apply the argument in Section 2, we obtain recurrence relations:
$$ \begin{cases}
P_{-1}(x)=1,\ \ Q_{-1}(x)=0,\ \ P_{0}(x)=0,\ \ Q_{0}(x)=1,\\
P_{n}(x)=c_{n}P_{n-1}(x)+P_{n-2}(x),\ \ Q_{n}(x)=c_{n}Q_{n-1}(x)+Q_{n-2}(x).
\end{cases}
$$
Then we have that
$$F(x)=\displaystyle\frac{1}{x} { {} \atop +}\displaystyle\frac{1}{1}{ {} \atop +}\displaystyle\frac{1}{x} { {} \atop +}\displaystyle\frac{1}{1/2} { {} \atop +}{ {} \atop \cdots}{ {} \atop +}\displaystyle\frac{1}{c_{n}} { {} \atop +}\displaystyle\frac{1}{F_{n+1}(x)}$$
and 
$$F(x)=\displaystyle\frac{F_{n+1}(x)P_{n}(x)+P_{n-1}(x)}{F_{n+1}(x)Q_{n}(x)+Q_{n-1}(x)}.\eqno(3.2)$$
Hence, we obtain that
$$F(x)-\displaystyle\frac{P_{n}(x)}{Q_{n}(x)}=\displaystyle\frac{(-1)^{n}}{Q_{n}(x)(F_{n+1}(x)Q_{n}(x)+Q_{n-1}(x))}.$$
By recurrence relations, we see that $Q_{n}(x)>0$ for any positive integer $n$ and any positive real number $x$. If $F_{n+1}(x)>0$, we obtain that
$$\left|F(x)-\displaystyle\frac{P_{n}(x)}{Q_{n}(x)}\right|<\displaystyle\frac{1}{Q_{n}(x)Q_{n-1}(x)}.$$ 
\\
\ \ \ \ In order to show that $\displaystyle\lim_{n\to \infty} \displaystyle\frac{P_{n}(x)}{Q_{n}(x)}=F(x)$ for any real number $x>0$, we will show that $F_{n+1}(x)>0$ and $\displaystyle\lim_{n\to \infty} Q_{n}(x)= \infty$ for any real number $x>0$.
\section{Lower bounds of $Q_{n}(x)$}
\ \ \ \ In \cite[Theorem 5.1]{d4}, we gave an explicit expression of $P_{n}(x)$ and $Q_{n}(x)$ respectively. If $n$ is even, $P_{n}(x)$ and $Q_{n}(x)$ are given by
$$P_{2k}(x)=\displaystyle\sum_{i=0}^{k-1} \displaystyle\sum_{l=0}^{k-i-1}\displaystyle\frac{{}_{k} C _{l+i+1}}{(l+i+1)\cdots(l+1)} (-1)^{l}x^{i},\eqno(4.1)$$
$$Q_{2k}(x)=\displaystyle\sum_{i=0}^{k} \displaystyle\frac{ {}_{k} C _{i}}{i!} x^{i},\eqno(4.2)$$
and if $n$ is odd, $P_{n}(x)$ and $Q_{n}(x)$ are given by
$$P_{2k-1}(x)=\displaystyle\sum_{i=0}^{k-1} \displaystyle\sum_{l=0}^{k-i-1} \displaystyle\frac{{}_{k} C _{l+i+1}}{(l+i)\cdots(l+1)} (-1)^{l}x^{i},\eqno(4.3)$$
$$Q_{2k-1}(x)=\displaystyle\sum_{i=0}^{k-1}\displaystyle\frac{ {}_{k} C _{i+1}}{i!}x^{i+1}.\eqno(4.4)$$
In order to show that $\displaystyle\lim_{n\to \infty} Q_{n}(x)= \infty$, we need to find lower bounds of $Q_{2k-1}(x)$ and $Q_{2k}(x)$. For this, we have the following Theorem.
\begin{thm}
For any positive integer $k$ and any positive real number $x$, it holds that
$$Q_{2k}(x)\ge (x+1)^{K},\ \ \ \ Q_{2k-1}(x)\ge x(x+1)^{L},$$
where $K=[\sqrt{k}], L=\left[\displaystyle\frac{-1+\sqrt{1+4k}}{2}\right]$.
\end{thm}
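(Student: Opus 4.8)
The plan is to prove both bounds by comparing $Q_{2k}(x)$ and $Q_{2k-1}(x)$ coefficient-by-coefficient with the binomial expansions of $(x+1)^{K}$ and $(x+1)^{L}$, using the explicit formulas (4.2) and (4.4). Since $x>0$, every monomial in $Q_{2k}(x)=\sum_{i=0}^{k}\frac{{}_{k}C_{i}}{i!}x^{i}$ and in $\frac{1}{x}Q_{2k-1}(x)=\sum_{i=0}^{k-1}\frac{{}_{k}C_{i+1}}{i!}x^{i}$ is nonnegative, so (noting $K=[\sqrt{k}]\le k$ and $L=\bigl[\frac{-1+\sqrt{1+4k}}{2}\bigr]\le k-1$, both immediate from the definitions) I may discard the terms of degree exceeding $K$, resp.\ $L$, and reduce to the term-wise inequalities $\frac{{}_{k}C_{i}}{i!}\ge {}_{K}C_{i}$ for $0\le i\le K$ and $\frac{{}_{k}C_{i+1}}{i!}\ge {}_{L}C_{i}$ for $0\le i\le L$. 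Multiplying by $x^{i}$, summing over $i$, and applying the binomial theorem then yields $Q_{2k}(x)\ge(x+1)^{K}$ and $Q_{2k-1}(x)\ge x(x+1)^{L}$ for $x>0$.

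Cancelling $1/i!$ turns these term-wise inequalities into the purely combinatorial claims ${}_{k}C_{i}\ge K(K-1)\cdots(K-i+1)$ for $0\le i\le K$ and ${}_{k}C_{i+1}\ge L(L-1)\cdots(L-i+1)$ for $0\le i\le L$. For the first I would write ${}_{k}C_{i}=\prod_{j=0}^{i-1}\frac{k-j}{i-j}$ and prove the factor-wise bound $\frac{k-j}{i-j}\ge K-j$, i.e.\ $(K-j)(i-j)\le k-j$, for $0\le j\le i-1$; this follows from $(K-j)(i-j)+j\le(K-j)^{2}+j=K^{2}-j(2K-1-j)\le K^{2}\le k$, where $2K-1-j\ge 0$ because $j\le i-1\le K-1$. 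For the second, after dropping the last (and, since $i\le L\le k-1$, nonnegative) factor of ${}_{k}C_{i+1}=\prod_{j=0}^{i}\frac{k-j}{i+1-j}$, it suffices to prove $\frac{k-j}{i+1-j}\ge L-j$, i.e.\ $(L-j)(i+1-j)\le k-j$, for $0\le j\le i-1$; using $i\le L$ this reduces to $(L-j)(L+1-j)+j=(L^{2}+L)-j(2L-j)\le L^{2}+L\le k$, with $2L-j\ge 0$ since $j\le i-1\le L-1$. The only properties of $K$ and $L$ that enter are $K^{2}\le k$ and $L^{2}+L\le k$, which are exactly what the floor definitions guarantee ($L$ being the floor of the positive root of $t^{2}+t-k=0$).

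I expect the substance of the argument to be this pair of one-line inequalities; the rest is bookkeeping that must nonetheless be done carefully: checking that $K-j$, $i-j$, $i+1-j$, and $k-i$ remain positive throughout, handling the boundary case $i=0$ directly (e.g.\ $\frac{{}_{k}C_{1}}{0!}=k\ge 1={}_{L}C_{0}$) and small cases such as $k=1$ (where $L=0$), and confirming that the truncated sums $\sum_{i=0}^{K}{}_{K}C_{i}x^{i}$ and $\sum_{i=0}^{L}{}_{L}C_{i}x^{i}$ genuinely equal $(x+1)^{K}$ and $(x+1)^{L}$. No analytic input is required: everything rests on the explicit polynomial expressions from \cite[Theorem 5.1]{d4} and elementary manipulation of binomial coefficients.
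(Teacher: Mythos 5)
Your proposal is correct and follows essentially the same route as the paper's proof: both reduce the claim to the coefficient-wise inequalities $\frac{{}_{k}C_{i}}{i!}\ge {}_{K}C_{i}$ and $\frac{{}_{k}C_{i+1}}{i!}\ge {}_{L}C_{i}$, and then verify them factor by factor via $(K-j)(i-j)\le k-j$ and $(L-j)(i+1-j)\le k-j$, using exactly the facts $K^{2}\le k$ and $L^{2}+L\le k$ guaranteed by the floor definitions. The only cosmetic difference is that you write ${}_{k}C_{i}$ as a product of fractions and bound each fraction, whereas the paper clears factorials first and compares the resulting products termwise.
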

\begin{proof}
We fix $k\ge 1$. Firstly we find a positive integer $K$ such that  $\displaystyle\sum_{i=0}^{k} \displaystyle\frac{ {}_{k} C _{i}}{i!} x^{i} -(x+1)^{K}$ is greater than or equal to 0 for $x>0$. It is enough that all coefficients of this polynomial are non-negative. In other words we find $K$ such that $k\ge K$ and $\displaystyle\frac{ {}_{k} C _{i}}{i!} \ge{}_{K} C _{i}$ for $0\le i\le K $. If $i=0$, it is trivial. So we consider cases of $1\le i\le K$. Multiplying both sides of the inequality by $(i!)^{2}$, then we have that
$$k(k-1)\cdots (k-i+1)\ge i!K(K-1)\cdots (K-i+1).$$
It is sufficient that $K$ satisfies the following conditions:
$$k\ge iK,\ \ \ \ k-1\ge (i-1)(K-1),\ \ \ \ \cdots,\ \ \ \ k-i+1\ge K-i+1$$
for $1\le i\le K$. Letting $i=K$ in the first inequality, we see that $\sqrt{k}\ge K$. Conversely, $1\le K\le \sqrt{k}$ imply $k-j\ge (i-j)(K-j)$ for $0\le j\le i-1$ and $1\le i \le K$. In fact, we have that
\begin{eqnarray*}
(i-j)(K-j)&\le&K(K-j)\\
&=&K^{2}-Kj\\
&\le&K^{2}-1\cdot j\\
&\le& k-j.
\end{eqnarray*}
 Putting $K=[\sqrt{k}]$, we obtain that
$$Q_{2k}(x)\ge (x+1)^{K}$$
for any positive real number $x$.\\
\ \ \ \ We next find a positive integer $L$ such that $\displaystyle\sum_{i=0}^{k-1} \displaystyle\frac{ {}_{k} C _{i+1}}{i!} x^{i+1}-x(x+1)^{L}$ is greater than or equal to 0 for $x>0$. Similarly, we find $L$ such that $k\ge L+1$ and $\displaystyle\frac{ {}_{k} C _{i+1}}{i!} \ge{}_{L} C _{i}$ for $0\le i\le L$. If $i=0$, It is also trivial. So we consider cases of $1\le i\le L\ (\le k-1)$. Multiplying both sides of the inequality by $i!(i+1)!$, then we have that
$$k(k-1)\cdots (k-i+1)(k-i)\ge (i+1)!L(L-1)\cdots (L-i+1)$$
for $1\le i\le L$. It is also sufficient that $L$ satisfies the following conditions:
$$k\ge (i+1)L,\ \ \ \ k-1\ge i(L-1),\ \ \ \ \cdots,\ \ \ \ k-i+1\ge 2(L-i+1),\ \ \ \ k-i\ge1.$$
The last inequality is trivial. Letting $i=L$ in the first inequality, we see that $\displaystyle\frac{-1+\sqrt{4k+1}}{2}\ge L$. Conversely,  $1\le L\le\displaystyle\frac{-1+\sqrt{4k+1}}{2}$ imply $k-j\ge (i-j+1)(L-j)$ for $0\le j\le i-1$ and $1\le i\le L$. In fact, we have that
\begin{eqnarray*}
(i-j+1)(L-j)&\le& (L+1)(L-j)\\
&=&L^{2}+(1-j)L-j\\
&\le& L^{2}+L-j\\
&\le&\displaystyle\frac{1+2k-\sqrt{4k+1}}{2}+\displaystyle\frac{-1+\sqrt{4k+1}}{2}-j\\
&=&k-j.
\end{eqnarray*}
Putting $L=\left[\displaystyle\frac{-1+\sqrt{4k+1}}{2}\right]$, we obtain that
$$Q_{2k-1}(x)\ge x(x+1)^{L}.$$
These complete a proof of Theorem 4.1.
\end{proof}
\vspace{5mm}
By Theorem 4.1, we see that
$$\lim_{n\to \infty}Q_{n}(x)=\infty$$
for any positive real number $x$.
\section{Two proofs of evaluations of $F_{n}(x)$}
\ \ \ \ In this section, we show that $F_{n}(x)>0$ for any positive real number $x$. Firstly, we introduce the following Lemma.
\begin{lem}
We consider the following conditios:\vspace{3mm}\\
\ \ \ \ \ \ $(i)$\ \ $\displaystyle\frac{P_{2k}(x)}{Q_{2k}(x)}<F(x)<\displaystyle\frac{P_{2k-1}(x)}{Q_{2k-1}(x)}\ \ (k=1,2,\cdots);$\vspace{3mm}\\
\ \ \ \ \ $(ii)$\ \ $F_{n}(x)>0\ \ (n=1,2,\cdots);$\vspace{3mm}\\
\ \ \ \ $(iii)$\ \ \ \ $F_{n}(x)>
\begin{cases}
0\ \ \ \ \ \mathrm{if}\ n:\mathrm{odd}, \vspace{3mm}\\
\displaystyle\frac{2}{n}\ \ \ \ \mathrm{if}\ n:\mathrm{even}
\end{cases}
\ \ (n=1,2,\cdots).$\vspace{3mm}\\ 
Then, we have that the condition $(i)$ implies the condition $(ii)$ and the conditions $(ii)$ and $(iii)$ are equivalent.
\end{lem}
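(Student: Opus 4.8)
\textit{Proof proposal.} The plan is to read everything off the identity (3.2),
$$F(x)=\frac{F_{n+1}(x)P_{n}(x)+P_{n-1}(x)}{F_{n+1}(x)Q_{n}(x)+Q_{n-1}(x)},$$
together with the two facts already available: $Q_{n}(x)>0$ for all $n\ge 1$ and $x>0$ (recorded in Section 3 from the recurrence relations), and $F(x)>0$ for $x>0$ (from the integral representation). The heart of the matter is the implication $(i)\Rightarrow(ii)$; the equivalence $(ii)\Leftrightarrow(iii)$ is then a short consequence of the functional equation $F_{n}(x)=c_{n}+1/F_{n+1}(x)$.

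For $(i)\Rightarrow(ii)$, first I would solve (3.2) for $F_{n+1}$: cross-multiplying and collecting the $F_{n+1}$ terms gives the identity
$$F_{n+1}(x)=\frac{P_{n-1}(x)-F(x)Q_{n-1}(x)}{F(x)Q_{n}(x)-P_{n}(x)},$$
valid for every $n\ge 0$ and every $x>0$ at which the denominator is nonzero; for $n=0$ it reduces to $F_{1}(x)=1/F(x)>0$. For $n\ge 1$ I would determine the sign of $P_{m}(x)-F(x)Q_{m}(x)$ from condition $(i)$: since $Q_{m}(x)>0$, the inequality $P_{2k}/Q_{2k}<F$ gives $P_{2k}(x)-F(x)Q_{2k}(x)<0$, while $F<P_{2k-1}/Q_{2k-1}$ gives $P_{2k-1}(x)-F(x)Q_{2k-1}(x)>0$; that is, $P_{m}(x)-F(x)Q_{m}(x)$ has sign $(-1)^{m-1}$ for every $m\ge 1$. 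In particular both the numerator and the denominator in the formula for $F_{n+1}$ are nonzero, so $F_{n+1}(x)$ is a well-defined finite real, and they carry the common sign $(-1)^{n}$; hence $F_{n+1}(x)>0$ for every $n\ge 1$, which together with the case $n=0$ gives $(ii)$.

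For the equivalence of $(ii)$ and $(iii)$, note first that $(iii)\Rightarrow(ii)$ is immediate since $2/n>0$. For $(ii)\Rightarrow(iii)$ I would use that (3.1) is equivalent to $F_{n}(x)=c_{n}+1/F_{n+1}(x)$: assuming $(ii)$, the term $1/F_{n+1}(x)$ is strictly positive, so $F_{n}(x)>c_{n}$, which for even $n$ is precisely $F_{n}(x)>2/n$ and for odd $n$ is stronger than the required $F_{n}(x)>0$.

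I do not expect a genuine obstacle: the argument is essentially the continued-fraction bookkeeping reviewed in Section 2 transplanted to $F(x)$ in place of $\alpha_{0}$. The only care needed is at the degenerate indices $n=0,1$ (where $Q_{-1}=0$ and the convergents degenerate, so one checks $F_{1}=1/F$ and the sign of $P_{1}(x)-F(x)Q_{1}(x)=F(x)x-1$ directly, using $P_{1}/Q_{1}=1/x$ and the $k=1$ case of $(i)$), and in recording that it is exactly the \emph{strict} inequalities in $(i)$ that keep the denominator $F(x)Q_{n}(x)-P_{n}(x)$ away from zero, so that the displayed formula for $F_{n+1}$ is legitimate and $F_{n+1}$ has no pole on $(0,\infty)$.
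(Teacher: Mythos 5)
Your proposal is correct and follows essentially the same route as the paper: both rearrange (3.2) into $F_{n+1}(x)\bigl(F(x)Q_{n}(x)-P_{n}(x)\bigr)=P_{n-1}(x)-F(x)Q_{n-1}(x)$, read off the signs of $P_{m}(x)-F(x)Q_{m}(x)$ from condition $(i)$, and then obtain $(ii)\Leftrightarrow(iii)$ from $F_{n}(x)=c_{n}+1/F_{n+1}(x)$. The only difference is cosmetic: you unify the parity cases with the sign $(-1)^{m-1}$, whereas the paper treats $n$ even, $n$ odd, and the initial indices $n=1,2$ separately.
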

\begin{proof}
$(i)\Longrightarrow (ii)$. By the equation (3.2), we see that
$$(F(x)Q_{n}(x)-P_{n}(x))F_{n+1}(x)=P_{n-1}(x)-F(x)Q_{n-1}(x)$$
for any positive integer $n$. If $n=2k$, since $F(x)Q_{2k}(x)-P_{2k}(x)>0$ and $P_{2k-1}(x)-F(x)Q_{2k-1}(x)>0$, we obtain that
$$F_{2k+1}(x)=\displaystyle\frac{P_{2k-1}(x)-F(x)Q_{2k-1}(x)}{F(x)Q_{2k}(x)-P_{2k}(x)}>0$$
and if $n=2k+1$, we also have that
$$F_{2k+2}(x)=\displaystyle\frac{P_{2k}(x)-F(x)Q_{2k}(x)}{F(x)Q_{2k+1}(x)-P_{2k+1}(x)}>0$$
for any positive integer $k$. If $n=1$, $F_{1}(x)=\displaystyle\frac{1}{F(x)}>0$ and if $n=2$, $F_{2}(x)=\displaystyle\frac{1}{F(x)Q_{1}(x)-P_{1}(x)}>0$.\\
\ \ \ \ $(ii)\Longrightarrow (iii)$. By the assumption, we have that
$$F_{2k}(x)=\displaystyle\frac{1}{F_{2k-1}(x)-x}>0,\ \ \ \ F_{2k+1}(x)=\displaystyle\frac{1}{F_{2k}(x)-\displaystyle\frac{1}{k}}>0.$$
So we obtain that $F_{2k-1}(x)>x>0$ and $F_{2k}(x)>\displaystyle\frac{1}{k}$ for any positive integer $k$. Then we see that
$$F_{n}(x)>
\begin{cases}
0\ \ \ \ \ \mathrm{if}\ n:\mathrm{odd}, \vspace{3mm}\\
\displaystyle\frac{2}{n}\ \ \ \ \mathrm{if}\ n:\mathrm{even}
\end{cases}
$$
for any positive integer $n$.\\
\ \ \ \ $(iii)\Longrightarrow (ii)$. It is trivial.\\
\ \ \ \ These complete a proof of Lemma 5.1.
\end{proof}
\vspace{5mm}
By Lemma 5.1, if we prove the condition $(i)$, then we obtain the condition $(iii)$ and this implies the evaluation of error terms stated at the end of Introduction. We give below two proofs of the condition $(i)$.
\begin{thm}
It holds the condition $(i)$ in Lemma 5.1.
\end{thm}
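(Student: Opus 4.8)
**The plan is to prove the two-sided inequality $(i)$ directly from the integral representation of $F(x)$ together with the asymptotic expansion quoted from \cite[Proposition 2.1]{d4} and the recurrence relations for $P_n(x)$, $Q_n(x)$.** The guiding idea is that $F(x)Q_n(x)-P_n(x)$ is a single quantity whose sign alternates with $n$, and whose size is controlled. Concretely, I would introduce $R_n(x):=F(x)Q_n(x)-P_n(x)$ and observe from the recurrence $P_n=c_nP_{n-1}+P_{n-2}$, $Q_n=c_nQ_{n-1}+Q_{n-2}$ that $R_n(x)=c_n R_{n-1}(x)+R_{n-2}(x)$, with initial data $R_{-1}(x)=F(x)\cdot 0-1=-1$ and $R_0(x)=F(x)\cdot 1-0=F(x)>0$. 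The claim $(i)$ is exactly the statement that $R_{2k}(x)>0$ and $R_{2k-1}(x)<0$ for all $k\ge 1$ and all $x>0$.

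To pin down the signs I would use the identity $P_nQ_{n-1}-P_{n-1}Q_n=(-1)^{n-1}$ (valid here by the same induction as in Section 2, since the initial values are $P_0=0,Q_0=1,P_{-1}=1,Q_{-1}=0$, giving $P_0Q_{-1}-P_{-1}Q_0=-1=(-1)^{0-1}$), which yields the "cross-difference" relation
$$Q_{n-1}(x)R_n(x)-Q_n(x)R_{n-1}(x)=(-1)^{n}.$$
Since all $Q_n(x)>0$ for $x>0$ (noted in Section 3 from the recurrence), this relation forces $R_n(x)$ and $R_{n-1}(x)$ to have opposite signs the moment neither vanishes, and it rules out $R_n(x)=0$ unless $R_{n-1}(x)$ has the wrong-looking value; so the whole argument reduces to an induction showing $R_n(x)\ne 0$ together with the correct sign at the base. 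The base case $R_0=F(x)>0$ is immediate from the integral representation, and $R_{-1}=-1<0$ has the alternating sign; then if $R_{n-1}$ has sign $(-1)^{n-1}$ (strictly), the displayed cross-difference relation combined with $Q_n>0$ pins $R_n$ to sign $(-1)^n$ — \emph{provided} I can exclude $R_n(x)=0$. Excluding that zero is where the analytic input enters: if $R_n(x_0)=0$ for some $x_0>0$ then $F(x_0)=P_n(x_0)/Q_n(x_0)$ is rational-in-the-obvious-sense, i.e.\ $F$ agrees with a truncated continued fraction exactly at $x_0$; but equation (3.2) then shows the tail $F_{n+1}(x_0)$ would be forced to $\infty$, contradicting that $F_{n+1}$ is a well-defined finite value (its finiteness following inductively from $F_1(x_0)=1/F(x_0)$ being finite and nonzero and the recursive definition (3.1), once one knows the "main term" is genuinely the leading asymptotic term so the denominator in (3.1) is nonzero).

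Alternatively — and this may be cleaner — I would control $R_n(x)$ by its behaviour as $x\to\infty$: from \cite[Proposition 2.1]{d4}, $F(x)$ has the asymptotic series $\sum_{k\ge 1}(-1)^{k-1}(k-1)!x^{-k}$, and the truncated continued fraction $P_n/Q_n$ is, by construction, the Padé-type approximant matching that series to the appropriate order, so $R_n(x)/Q_n(x)=F(x)-P_n(x)/Q_n(x)=\mathcal{O}(x^{-(n+1)})$ or similar, with a computable leading coefficient whose sign is $(-1)^n$. Since $R_n(x)$ is an analytic function of $x$ on $(0,\infty)$ that cannot vanish (again by the tail argument above, or by noting $R_n$ times $Q_n$ equals $F\cdot Q_n^2$ minus an integer-coefficient-type polynomial and invoking the cross-difference relation to propagate non-vanishing from $x$ large down to all $x>0$ via the intermediate value theorem applied to the product $R_nR_{n-1}$, which has constant sign $-1$ by the cross-difference identity once nonzero), its sign on all of $(0,\infty)$ equals its sign near $+\infty$, namely $(-1)^n$. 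That gives $R_{2k}>0$ and $R_{2k-1}<0$, which is precisely condition $(i)$.

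\textbf{The main obstacle} I anticipate is the non-vanishing of $R_n(x)$ on $(0,\infty)$ — everything else (the recurrence for $R_n$, the cross-difference identity, the sign bookkeeping, the asymptotics at infinity) is routine. The cross-difference identity $Q_{n-1}R_n-Q_nR_{n-1}=(-1)^n$ is very strong: it shows $R_n$ and $R_{n-1}$ can never vanish simultaneously, and that wherever $R_n=0$ the value of $R_{n-1}$ is exactly $(-1)^{n+1}/Q_{n-1}(x)$, of definite sign. So the cleanest route is probably a single joint induction on $n$ proving the pair of strict inequalities $(-1)^nR_n(x)>0$ for all $x>0$: the inductive step gets the sign right but leaves open a possible zero of $R_n$; to close the zero, one uses that $R_n$ is continuous on $(0,\infty)$ with $(-1)^nR_n(x)\to 0^+$ as $x\to\infty$ (sign from the leading asymptotic coefficient) and that at such a hypothetical zero the derivative / neighbouring-sign structure forced by the $R_n$-recurrence $R_n=c_nR_{n-1}+R_{n-2}$ would contradict the induction hypothesis on $R_{n-2}$ (which has sign $(-1)^n$, same as what $R_n$ should have, while $c_nR_{n-1}$ has sign $(-1)^n$ too when $c_n>0$ — so in fact $R_n=c_nR_{n-1}+R_{n-2}$ is a sum of two quantities of the \emph{same} sign $(-1)^n$, hence is \emph{strictly} of that sign and can never be zero). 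That last observation — that $c_n>0$ makes the recurrence sign-monotone — is the crux and makes the induction go through with no analytic subtlety at all: the base cases $R_{-1}=-1$, $R_0=F(x)>0$ already have the strict alternating sign, and $R_n=c_nR_{n-1}+R_{n-2}$ with $c_n>0$ propagates strict sign $(-1)^n$ forever. This is the proof I would write up.
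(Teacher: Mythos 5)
Your reduction of condition $(i)$ to the sign claim $(-1)^{n}R_{n}(x)>0$ for $R_{n}(x):=F(x)Q_{n}(x)-P_{n}(x)$ is a correct and natural reformulation, and the cross-difference identity $Q_{n-1}R_{n}-Q_{n}R_{n-1}=(-1)^{n}$ is right. But the ``crux'' that is supposed to make your induction close --- that $R_{n}=c_{n}R_{n-1}+R_{n-2}$ is a sum of two terms of the \emph{same} sign $(-1)^{n}$ --- rests on a sign error. Under the induction hypothesis $R_{n-1}$ has sign $(-1)^{n-1}$, so $c_{n}R_{n-1}$ (with $c_{n}>0$) also has sign $(-1)^{n-1}$, which is \emph{opposite} to the sign $(-1)^{n}$ of $R_{n-2}$; the recurrence therefore combines two terms of opposite signs and by itself determines nothing about the sign of $R_{n}$. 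This is already visible at the first step: $R_{1}=xF(x)-1=c_{1}R_{0}+R_{-1}$ with $c_{1}R_{0}=xF(x)>0$ and $R_{-1}=-1<0$, and the needed inequality $xF(x)<1$ is a genuine analytic fact about $F(x)=\int_{0}^{\infty}e^{-t}(t+x)^{-1}\,dt$, not a formal consequence of the recurrence. So the assertion that the induction goes through ``with no analytic subtlety at all'' is precisely where the proof fails.

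The fallback routes you sketch for excluding a zero of $R_{n}$ do not close the gap either. The cross-difference identity is a linear relation and does not force $R_{n}R_{n-1}<0$; moreover, at a hypothetical zero $R_{n}(x_{0})=0$ it gives $R_{n-1}(x_{0})=(-1)^{n+1}/Q_{n}(x_{0})$, whose sign $(-1)^{n-1}$ is perfectly consistent with the induction hypothesis, so no contradiction arises. The ``tail'' argument is circular: finiteness of $F_{n+1}(x_{0})$ requires $F_{n}(x_{0})\neq c_{n}$, which is part of condition $(iii)$ of Lemma 5.1, i.e.\ part of what is being proved. Some analytic input on $F$ beyond its asymptotic series is unavoidable, and the paper supplies it in two ways: the first proof writes $\{x^{k}F(x)\}^{(k)}=k!\,(F(x)Q_{2k}(x)-P_{2k}(x))$ and $x\{x^{k-1}F(x)\}^{(k)}=(k-1)!\,(F(x)Q_{2k-1}(x)-P_{2k-1}(x))$ via the explicit formulas (4.1)--(4.4) and simultaneously as the manifestly signed integrals $k!\int_{0}^{\infty}e^{-t}t^{k}(t+x)^{-k-1}dt>0$ and $-xk!\int_{0}^{\infty}e^{-t}t^{k-1}(t+x)^{-k-1}dt<0$; the second proof shows that $f_{m}(x)=(-1)^{m}e^{-x}\bigl(F(x)-P_{m}(x)/Q_{m}(x)\bigr)$ tends to $0$ at infinity and is strictly decreasing, by computing $f_{m}'$ in closed form through the recursion for $\beta_{m}$. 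One of these (or an equivalent analytic estimate) is the missing ingredient in your write-up.
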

\begin{proof}[The first proof] We consider the $k$ th derivative of $x^{k}F(x)$. Then, we obtain that
$$\{x^{k}F(x)\}^{(k)}=\displaystyle\sum_{i=0}^{k} {}_{k} C _{i} \displaystyle\frac{k!}{i!}x^{i} F^{(i)}(x),\eqno(5.1)$$
where $F^{(0)}(x):=F(x)$. Since $F(x)=\displaystyle\int_{0}^{\infty} \displaystyle\frac{e^{-t} }{t+x} dt$, we see that
$$F^{(i)}(x)=(-1)^{i}i! \displaystyle\int_{0}^{\infty} \displaystyle\frac{e^{-t} }{(t+x)^{i+1}} dt.\eqno(5.2)$$
Substituting this equality into the right hand side of the equality (5.1), we have that
\begin{eqnarray*}
\{x^{k}F(x)\}^{(k)}&=&\displaystyle\sum_{i=0}^{k} {}_{k} C _{i} \displaystyle\frac{k!}{i!}x^{i}(-1)^{i}i! \displaystyle\int_{0}^{\infty} \displaystyle\frac{e^{-t} }{(t+x)^{i+1}} dt\\
&=&k!\displaystyle\int_{0}^{\infty} \displaystyle\frac{e^{-t}}{t+x} \displaystyle\sum_{i=0}^{k}   {}_{k} C _{i} \displaystyle\frac{(-x)^{i}}{(t+x)^{i}} dt.
\end{eqnarray*}
Since $\displaystyle\sum_{i=0}^{k}   {}_{k} C _{i} \displaystyle\frac{(-x)^{i}}{(t+x)^{i}}=\left(1-\displaystyle\frac{x}{t+x}\right)^{k}=\left(\displaystyle\frac{t}{t+x}\right)^{k}$, then we obtain that
$$\{x^{k}F(x)\}^{(k)}=k!\displaystyle\int_{0}^{\infty} \displaystyle\frac{e^{-t}}{t+x} \left(\displaystyle\frac{t}{t+x}\right)^{k}dt=k!\displaystyle\int_{0}^{\infty} \displaystyle\frac{e^{-t}t^{k}}{(t+x)^{k+1}}dt>0.$$
Using an integration by parts in the right hand side of the equality (5.2), then we see that
\begin{eqnarray*}
F^{(i)}(x)&=&(-1)^{i}i!\left(\left[-\displaystyle\frac{e^{-t}}{i(t+x)^{i}}\right]_{0}^{\infty}-\displaystyle\int_{0}^{\infty} \displaystyle\frac{e^{-t}}{i(t+x)^{i}}dt\right)\\
&=&(-1)^{i-1}(i-1)!\displaystyle\int_{0}^{\infty} \displaystyle\frac{e^{-t}}{(t+x)^{i}}dt-\displaystyle\frac{(-1)^{i-1}(i-1)!}{x^{i}}\\
&=&F^{(i-1)}(x)-\displaystyle\frac{(-1)^{i-1}(i-1)!}{x^{i}}.
\end{eqnarray*}
Therefore, we obtain that
$$F^{(i)}(x)=F(x)-\displaystyle\sum_{l=1}^{i} \displaystyle\frac{(-1)^{l-1}(l-1)!}{x^{l}}.\eqno(5.3)$$
Substituting this equality into the right hand side of the equality (5.1), we obtain that
\begin{eqnarray*}
\{x^{k}F(x)\}^{(k)}&=&\displaystyle\sum_{i=0}^{k} {}_{k} C _{i} \displaystyle\frac{k!}{i!}x^{i} F^{(i)}(x)\\
&=&k!F(x)+k!\displaystyle\sum_{i=1}^{k}   \displaystyle\frac{{}_{k} C _{i}}{i!}x^{i} \left(F(x)-\displaystyle\sum_{l=1}^{i} \displaystyle\frac{(-1)^{l-1}(l-1)!}{x^{l}}\right)\\
&=&k!F(x)\displaystyle\sum_{i=0}^{k} \displaystyle\frac{{}_{k} C _{i}}{i!}x^{i}-k!\displaystyle\sum_{i=1}^{k} \displaystyle\sum_{l=1}^{i}  {}_{k} C _{i}\displaystyle\frac{(-1)^{l-1}(l-1)!}{i!}x^{i-l}\\
&=&k!F(x)\displaystyle\sum_{i=0}^{k} \displaystyle\frac{{}_{k} C _{i}}{i!}x^{i}-k!\displaystyle\sum_{i=0}^{k-1} \displaystyle\sum_{l=0}^{i}  {}_{k} C _{i+1}\displaystyle\frac{(-1)^{l}l!}{(i+1)!}x^{i-l}\\
&=&k!F(x)\displaystyle\sum_{i=0}^{k} \displaystyle\frac{{}_{k} C _{i}}{i!}x^{i}-k!\displaystyle\sum_{i=0}^{k-1} \displaystyle\sum_{l=0}^{k-i-1}  {}_{k} C _{i+l+1}\displaystyle\frac{(-1)^{l}l!}{(i+l+1)!}x^{i}\\
&=&k!F(x)\displaystyle\sum_{i=0}^{k} \displaystyle\frac{{}_{k} C _{i}}{i!}x^{i}-k!\displaystyle\sum_{i=0}^{k-1} \displaystyle\sum_{l=0}^{k-i-1}\displaystyle\frac{{}_{k} C _{l+i+1}}{(l+i+1)\cdots(l+1)}(-1)^{l}x^{i}>0.
\end{eqnarray*}
Therefore, we obtain that
$$F(x)>\displaystyle\frac{\displaystyle\sum_{i=0}^{k-1} \displaystyle\sum_{l=0}^{k-i-1}\displaystyle\frac{{}_{k} C _{l+i+1}}{(l+i+1)\cdots(l+1)} (-1)^{l}x^{i}}{\displaystyle\sum_{i=0}^{k} \displaystyle\frac{ {}_{k} C _{i}}{i!} x^{i}}=\displaystyle\frac{P_{2k}(x)}{Q_{2k}(x)}$$
by the equalities (4.1) and (4.2).\\
\ \ \ \ We next consider multiplying the $k$ th derivative of $x^{k-1}F(x)$ by $x$. Then we obtain that
$$x\{x^{k-1}F(x)\}^{(k)}=x\displaystyle\sum_{i=1}^{k}  {}_{k} C _{i} \displaystyle\frac{(k-1)!}{(i-1)!}x^{i-1}F^{(i)}(x).\eqno(5.4)$$
Substituting the equality (5.2) into the right hand side of this equality (5.4), we have that
\begin{eqnarray*}
x\{x^{k-1}F(x)\}^{(k)}&=&\displaystyle\sum_{i=1}^{k}  {}_{k} C _{i} \displaystyle\frac{(k-1)!}{(i-1)!}x^{i}(-1)^{i}i! \displaystyle\int_{0}^{\infty} \displaystyle\frac{e^{-t} }{(t+x)^{i+1}} dt\\
&=&(k-1)!\displaystyle\int_{0}^{\infty} \displaystyle\frac{e^{-t}}{t+x} \displaystyle\sum_{i=1}^{k}   {}_{k} C _{i} \displaystyle\frac{i(-x)^{i}}{(t+x)^{i}} dt.
\end{eqnarray*}
By calculating the derivative of the both sides of the identity $\left(1-\displaystyle\frac{x}{t+x}\right)^{k}=\displaystyle\sum_{i=0}^{k}   {}_{k} C _{i} \displaystyle\frac{(-x)^{i}}{(t+x)^{i}}$ with respect to $x$, we obtain that
$$k\left(1-\displaystyle\frac{x}{t+x}\right)^{k-1}\displaystyle\frac{-t}{(t+x)^{2}}=\displaystyle\sum_{i=1}^{k}   {}_{k} C _{i} \displaystyle\frac{i(-x)^{i-1}}{(t+x)^{i-1}}\cdot \displaystyle\frac{-t}{(t+x)^{2}}.$$
Dividing the both sides by $-\displaystyle\frac{t}{(t+x)^{2}}$ and multiplying them by $-\displaystyle\frac{x}{t+x}$, we see that
$$\displaystyle\sum_{i=1}^{k}   {}_{k} C _{i} \displaystyle\frac{i(-x)^{i}}{(t+x)^{i}}=-\displaystyle\frac{kxt^{k-1}}{(t+x)^{k}}.$$
Then, we have that
$$x\{x^{k-1}F(x)\}^{(k)}=(k-1)!\displaystyle\int_{0}^{\infty} \displaystyle\frac{e^{-t}}{t+x} \left(-\displaystyle\frac{kxt^{k-1}}{(t+x)^{k}} \right)dt=-xk!\displaystyle\int_{0}^{\infty} \displaystyle\frac{e^{-t}t^{k-1}}{(t+x)^{k+1}}dt<0.$$
Substituting the equality (5.3) into the right hand side of the equality (5.4), we obtain that
\begin{eqnarray*}
x\{x^{k-1}F(x)\}^{(k)}&=&\displaystyle\sum_{i=1}^{k}  {}_{k} C _{i} \displaystyle\frac{(k-1)!}{(i-1)!}x^{i}\left(F(x)-\displaystyle\sum_{l=1}^{i} \displaystyle\frac{(-1)^{l-1}(l-1)!}{x^{l}}\right)\\
&=&(k-1)!F(x)\displaystyle\sum_{i=0}^{k-1}  \displaystyle\frac{ {}_{k} C _{i+1}}{i!}x^{i+1}-(k-1)!\displaystyle\sum_{i=1}^{k}\displaystyle\sum_{l=1}^{i} {}_{k} C _{i} \displaystyle\frac{(-1)^{l-1}(l-1)!}{(i-1)!}x^{i-l}\\
&=&(k-1)!F(x)\displaystyle\sum_{i=0}^{k-1}  \displaystyle\frac{ {}_{k} C _{i+1}}{i!}x^{i+1}-(k-1)!\displaystyle\sum_{i=0}^{k-1}\displaystyle\sum_{l=0}^{i} {}_{k} C _{i+1} \displaystyle\frac{(-1)^{l}l!}{i!}x^{i-l}\\
&=&(k-1)!F(x)\displaystyle\sum_{i=0}^{k-1}  \displaystyle\frac{ {}_{k} C _{i+1}}{i!}x^{i+1}-(k-1)!\displaystyle\sum_{i=0}^{k-1}\displaystyle\sum_{l=0}^{k-i-1} {}_{k} C _{i+l+1} \displaystyle\frac{(-1)^{l}l!}{(i+l)!}x^{i}\\
&=&(k-1)!F(x)\displaystyle\sum_{i=0}^{k-1} \displaystyle\frac{ {}_{k} C _{i+1}}{i!}x^{i+1}-(k-1)!\displaystyle\sum_{i=0}^{k-1}\displaystyle\sum_{l=0}^{k-i-1}  \displaystyle\frac{{}_{k} C _{i+l+1}}{(i+l)\cdots (l+1)}(-1)^{l}x^{i}<0.
\end{eqnarray*}
Therefore, we have that
$$F(x)<\displaystyle\frac{\displaystyle\sum_{i=0}^{k-1}\displaystyle\sum_{l=0}^{k-i-1}  \displaystyle\frac{{}_{k} C _{i+l+1}}{(i+l)\cdots (l+1)}(-1)^{l}x^{i}}{\displaystyle\sum_{i=0}^{k-1}  {}_{k} C _{i+1} \displaystyle\frac{1}{i!}x^{i+1}}=\displaystyle\frac{P_{2k-1}(x)}{Q_{2k-1}(x)}$$
by the equalities (4.3) and (4.4).
\end{proof}
\vspace{5mm}
We define the function $f_{m}(x)$ by
$$(-1)^{m}\left(e^{-x}F(x)-e^{-x}\displaystyle\frac{P_{m}(x)}{Q_{m}(x)}\right)$$
for any positive integer $m$. If we see $f_{m}(x)>0$ for any positive integer $m$, we have that
$$\displaystyle\frac{P_{2k}(x)}{Q_{2k}(x)}<F(x)<\displaystyle\frac{P_{2k-1}(x)}{Q_{2k-1}(x)}$$
for any positive integer $k$. In order to show this, in the first proof we need the $k$ th derivative and an explicit expression of $P_{n}(x)$ and $Q_{n}(x)$, however in the second proof it is enough to use the derivative of $f_{m}(x)$ and recurrence relations of $P_{n}(x)$ and $Q_{n}(x)$. In order to show that $f_{m}(x)>0$, we prepare the following lemma.
\begin{lem}
Let $f(x)$ be a differentiable function defined on $\mathbb{R}_{>0}$. We assume that the limit of the function $f(x)$ equals 0 as $x$ approaches infinity and the derivative of $f(x)$, denoted by $f^{'}(x)$ satisfies the condition:
$$f^{'}(x)<0\ \ \ \ \mbox{for}\ \forall x>0.$$ 
Then we have that 
$$f(x)>0\ \ \ \ \mbox{for}\ \forall x>0.$$
\end{lem}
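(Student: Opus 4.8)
The plan is to argue by contradiction, exploiting the strict monotonicity of $f$ that is forced by the sign condition on $f'$. First I would record that, since $f'(x)<0$ for every $x>0$, the function $f$ is strictly decreasing on $\mathbb{R}_{>0}$: for any $a<b$ in $\mathbb{R}_{>0}$ the mean value theorem gives $f(b)-f(a)=f'(\xi)(b-a)<0$ for some $\xi\in(a,b)$, so $f(b)<f(a)$.

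Next I would assume, toward a contradiction, that $f(x_0)\le 0$ for some $x_0>0$, and then pick an auxiliary point $x_1>x_0$. Strict monotonicity yields $f(x_1)<f(x_0)\le 0$, so in fact $f(x_1)<0$. Applying strict monotonicity once more, $f(x)<f(x_1)<0$ for every $x>x_1$, and letting $x\to\infty$ gives $\lim_{x\to\infty}f(x)\le f(x_1)<0$. This contradicts the hypothesis $\lim_{x\to\infty}f(x)=0$, so no such $x_0$ exists and $f(x)>0$ for all $x>0$.

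I expect no genuine obstacle here: the statement is a routine consequence of the mean value theorem together with the assumed vanishing of $f$ at infinity. The only point requiring a little care is getting the strict inequality $f(x)>0$ rather than merely $f(x)\ge 0$; this is exactly why I introduce the point $x_1$ strictly to the right of $x_0$, so that $f(x_1)$ is strictly negative and therefore bounds the limit away from $0$. Once this lemma is in place, it will be applied to the functions $f_m(x)=(-1)^{m}\bigl(e^{-x}F(x)-e^{-x}P_m(x)/Q_m(x)\bigr)$ to deduce condition $(i)$ of Lemma 5.1, after one checks that each $f_m$ tends to $0$ at infinity and has negative derivative — but that verification belongs to the second proof of Theorem 5.2, not to the present lemma.
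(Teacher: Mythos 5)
Your proof is correct and follows essentially the same route as the paper: both argue by contradiction, use $f'<0$ to get strict monotonicity, pick an auxiliary point strictly to the right of the hypothetical $x_0$ with $f(x_0)\le 0$ (the paper takes $x_0+1$, you take a generic $x_1>x_0$), and conclude that $f$ stays bounded away from $0$ from below, contradicting $\lim_{x\to\infty}f(x)=0$. No gaps.
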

\begin{proof}
We assume that there exists $x_{0}>0$ such that $f(x_{0})\le 0$. Since $f^{'}(x)<0$ for any positive real number $x$, the function $f(x)$ is strictly decreasing. Therefore, we have that 
$$f(x_{0}+1)<f(x_{0})\le 0.$$
Hence putting $\varepsilon:=-f(x_{0}+1)$, we obtain that
$$|f(x)|\ge \varepsilon\ \ \ \ \mbox{for}\ \forall x>x_{0}+1.$$
This contradicts $\displaystyle\lim_{x\to \infty} f(x)=0$.
\end{proof}
\vspace{5mm}
By Lemma 5.3, for our purpose it is enough to show that $\displaystyle\lim_{x\to \infty} f_{m}(x)=0$ and $f^{'}_{m}(x)<0$ for any positive real number $x$. Since $\displaystyle\lim_{x\to \infty} F(x)=0$, we see that 
$$\displaystyle\lim_{x\to \infty} f_{m}(x)=\displaystyle\lim_{x\to \infty} (-1)^{m}\left(e^{-x}F(x)-e^{-x}\displaystyle\frac{P_{m}(x)}{Q_{m}(x)}\right)=0.$$ 
The derivative of the function $f_{m}(x)$ is calculated as the following:
\begin{eqnarray*}
f^{'}_{m}(x)&=&(-1)^{m}\left(-e^{-x}F(x)+e^{-x}F^{'}(x)+e^{-x}\displaystyle\frac{P_{m}(x)}{Q_{m}(x)}-e^{-x}\displaystyle\frac{P^{'}_{m}(x)Q_{m}(x)-P_{m}(x)Q^{'}_{m}(x)}{Q^{2}_{m}(x)}\right)\\
&=&(-1)^{m}e^{-x}\displaystyle\frac{Q^{2}_{m}(x)(F^{'}(x)-F(x))+P_{m}(x)Q_{m}(x)-P^{'}_{m}(x)Q_{m}(x)+P_{m}(x)Q^{'}_{m}(x)}{Q^{2}_{m}(x)}.
\end{eqnarray*}
By the equation (5.3) with $i=1$, we see that $F^{'}(x)=F(x)-\displaystyle\frac{1}{x}$. Then we obtain that
\begin{eqnarray*}
f^{'}_{m}(x)&=&(-1)^{m}e^{-x}\displaystyle\frac{-\displaystyle\frac{1}{x}Q^{2}_{m}(x)+P_{m}(x)Q_{m}(x)-P^{'}_{m}(x)Q_{m}(x)+P_{m}(x)Q^{'}_{m}(x)}{Q^{2}_{m}(x)}\\ \\
&=&(-1)^{m+1}e^{-x}\displaystyle\frac{Q^{2}_{m}(x)-xP_{m}(x)Q_{m}(x)+xP^{'}_{m}(x)Q_{m}(x)-xP_{m}(x)Q^{'}_{m}(x)}{xQ^{2}_{m}(x)}.
\end{eqnarray*}
In order to calculate the numerator, we define
$$\beta_{m}(x):=Q^{2}_{m}(x)-xP_{m}(x)Q_{m}(x)+xP^{'}_{m}(x)Q_{m}(x)-xP_{m}(x)Q_{m}^{'}(x)$$
for $m\ge 1$. Substituting $P_{m}(x)=c_{m}P_{m-1}(x)+P_{m-2}(x)$ and $Q_{m}(x)=c_{m}Q_{m-1}(x)+Q_{m-2}(x)$ into the right hand side of $\beta_{m}(x)$, we obtain the following the equation under a little complicated calculation:
\begin{eqnarray*}
\beta_{m}(x)&=&c_{m}^{2}\beta_{m-1}(x)+\beta_{m-2}(x)+c_{m} \{2Q_{m-1}(x)Q_{m-2}(x)-xP_{m-1}(x)Q_{m-2}(x)-xP_{m-2}(x)Q_{m-1}(x)\\
& &+xP^{'}_{m-1}(x)Q_{m-2}(x)+xP^{'}_{m-2}(x)Q_{m-1}(x)-xP_{m-1}(x)Q^{'}_{m-2}(x)-xP_{m-2}(x)Q^{'}_{m-1}(x)\}\\
& &+c^{'}_{m}x(P_{m-1}(x)Q_{m-2}(x)-P_{m-2}(x)Q_{m-1}(x)),
\end{eqnarray*}
where $c_{m}^{'}$ denotes the derivative of $c_{m}$ with respect to $x$. Setting
$$\gamma_{m}(x):=2Q_{m}(x)Q_{m-1}(x)-xP_{m}(x)Q_{m-1}(x)-xP_{m-1}(x)Q_{m}(x)+xP^{'}_{m}(x)Q_{m-1}(x)$$
$$+xP^{'}_{m-1}(x)Q_{m}(x)-xP_{m}(x)Q^{'}_{m-1}(x)-xP_{m-1}(x)Q^{'}_{m}(x)$$ 
for $m\ge 2$ and using $P_{m-1}(x)Q_{m-2}(x)-P_{m-2}(x)Q_{m-1}(x)=(-1)^{m}$, we have that
$$\beta_{m}(x)=c^{2}_{m}\beta_{m-1}(x)+\beta_{m-2}(x)+c_{m}\gamma_{m-1}(x)+c_{m}^{'}(-1)^{m}x.$$
A similar calculation leads to the following equation:
$$\gamma_{m}(x)=\gamma_{m-1}(x)+2c_{m}\beta_{m-1}(x).$$
Since $P_{1}(x)=1$, $P_{2}(x)=1$, $Q_{1}(x)=x$, and $Q_{2}(x)=x+1$, we see that $\beta_{1}(x)=-x$, $\beta_{2}(x)=1$, and $\gamma_{2}(x)=-x$. Summarizing the above, the following two recurrence relations are obtained:
$$\begin{cases}
\beta_{m}(x)=c^{2}_{m}\beta_{m-1}(x)+\beta_{m-2}(x)+c_{m}\gamma_{m-1}(x)+c_{m}^{'}(-1)^{m}x,\\
\gamma_{m}(x)=\gamma_{m-1}(x)+2c_{m}\beta_{m-1}(x),
\end{cases}\ \ \ \ c_{m}=
\begin{cases}
x\ \ \ \ \mbox{if}\ m:\mbox{odd},\vspace{3mm} \\ 
\displaystyle\frac{2}{m}\ \ \ \ \mbox{if}\ m:\mbox{even},
\end{cases}$$
for any integer $m$ greater than or equal to 3 with initial conditions $\beta_{1}(x)=-x$, $\beta_{2}(x)=1$, and $\gamma_{2}(x)=-x$. 
\begin{thm}
Let $m$ be any integer grater than or equal to 1. it holds that
$$\beta_{m}(x)=\displaystyle\frac{(-1)^{m}}{c_{m+1}}x,\ \ \ \ \gamma_{m+1}(x)=(-1)^{m}x.$$
\end{thm}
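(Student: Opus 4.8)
My plan is to establish both identities at once by a single induction on $m$, running the two recurrence relations for $\beta_{m}(x)$ and $\gamma_{m}(x)$ in tandem (each one feeds the other, so neither can be handled in isolation). Before starting I would record the elementary identity $\frac{1}{c_{m+1}}=\frac{1}{c_{m-1}}+c'_{m}$ for $m\ge 2$: when $m$ is even both sides equal $\frac{1}{x}$, and when $m$ is odd both sides equal $\frac{m+1}{2}$. This is the one ancillary fact the argument relies on.

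For the base of the induction I would check the cases $m=1$ and $m=2$ directly. From the initial data $\beta_{1}(x)=-x$, $\beta_{2}(x)=1$, $\gamma_{2}(x)=-x$ and $c_{2}=1$, $c_{3}=x$ one gets $\beta_{1}(x)=\frac{(-1)^{1}x}{c_{2}}$, $\gamma_{2}(x)=(-1)^{1}x$, $\beta_{2}(x)=\frac{(-1)^{2}x}{c_{3}}$; and one application of the $\gamma$-recurrence gives $\gamma_{3}(x)=\gamma_{2}(x)+2c_{3}\beta_{2}(x)=-x+2x=x=(-1)^{2}x$.

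For the inductive step, fix $m\ge 3$ and assume the assertion for $m-1$ and $m-2$; in particular $\beta_{m-1}(x)=\frac{(-1)^{m-1}x}{c_{m}}$, $\beta_{m-2}(x)=\frac{(-1)^{m}x}{c_{m-1}}$, $\gamma_{m-1}(x)=(-1)^{m}x$, and $\gamma_{m}(x)=(-1)^{m-1}x$. Substituting the first three of these into
$$\beta_{m}(x)=c_{m}^{2}\beta_{m-1}(x)+\beta_{m-2}(x)+c_{m}\gamma_{m-1}(x)+c'_{m}(-1)^{m}x,$$
the term $c_{m}^{2}\beta_{m-1}(x)=(-1)^{m-1}c_{m}x$ cancels against $c_{m}\gamma_{m-1}(x)=(-1)^{m}c_{m}x$, and what remains is $(-1)^{m}x\left(\frac{1}{c_{m-1}}+c'_{m}\right)=\frac{(-1)^{m}x}{c_{m+1}}$ by the identity above. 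Then, plugging $\beta_{m}(x)=\frac{(-1)^{m}x}{c_{m+1}}$ and $\gamma_{m}(x)=(-1)^{m-1}x$ into $\gamma_{m+1}(x)=\gamma_{m}(x)+2c_{m+1}\beta_{m}(x)$ yields $\gamma_{m+1}(x)=(-1)^{m-1}x+2(-1)^{m}x=(-1)^{m}x$, completing the induction.

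The heart of the computation --- the pair of recurrences for $\beta_{m}(x)$ and $\gamma_{m}(x)$ --- has already been carried out above, so the remaining work is purely bookkeeping. The only place requiring any care is tracking the signs $(-1)^{m}$ and the parity-dependent value of $c_{m}$ through the substitutions, and remembering to invoke $\frac{1}{c_{m+1}}=\frac{1}{c_{m-1}}+c'_{m}$ precisely at the step where the $c_{m}$-terms collapse; I do not anticipate any genuine difficulty beyond that.
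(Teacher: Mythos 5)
Your proposal is correct and follows essentially the same route as the paper: a two-term induction on the coupled recurrences, with the cancellation of $c_{m}^{2}\beta_{m-1}(x)$ against $c_{m}\gamma_{m-1}(x)$ and the parity check of $\frac{1}{c_{m-1}}+c'_{m}=\frac{1}{c_{m+1}}$ (which the paper verifies inline rather than isolating upfront). No gaps.
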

\begin{proof}
We prove these by induction. In the base case, these are true by the initial conditions. Let $k$ be an integer grater than or equal to 3 and assume that 
$$\beta_{k-2}(x)=\displaystyle\frac{(-1)^{k-2}}{c_{k-1}}x,\ \ \ \ \beta_{k-1}(x)=\displaystyle\frac{(-1)^{k-1}}{c_{k}}x,\ \ \ \ \gamma_{k-1}(x)=(-1)^{k-2}x.$$
It is enough to prove the equations about $\beta_{k}(x)$ and $\gamma_{k}(x)$. By recurrence relations and assumptions, we obtain that
\begin{eqnarray*}
\beta_{k}(x)&=&c^{2}_{k}\beta_{k-1}(x)+\beta_{k-2}(x)+c_{k}\gamma_{k-1}(x)+c_{k}^{'}(-1)^{k}x\\ \\
&=&c^{2}_{k}\displaystyle\frac{(-1)^{k-1}}{c_{k}}x+\displaystyle\frac{(-1)^{k-2}}{c_{k-1}}x+c_{k}(-1)^{k-2}x+c_{k}^{'}(-1)^{k}x\\ \\
&=&c_{k}(-1)^{k-1}x+\displaystyle\frac{(-1)^{k-2}}{c_{k-1}}x+c_{k}(-1)^{k-2}x+c_{k}^{'}(-1)^{k}x\\ \\
&=&(-1)^{k}x\left(\displaystyle\frac{1}{c_{k-1}}+c_{k}^{'}\right)
\end{eqnarray*}
and
\begin{eqnarray*}
\gamma_{k}(x)&=&\gamma_{k-1}(x)+2c_{k}\beta_{k-1}(x)\\ \\
&=& (-1)^{k-2}x+2c_{k}\displaystyle\frac{(-1)^{k-1}}{c_{k}}x\\ \\
&=&(-1)^{k-2}x+2(-1)^{k-1}x\\ \\
&=&(-1)^{k-1}x.
\end{eqnarray*}
If $k$ is even, 
$$\displaystyle\frac{1}{c_{k-1}}+c_{k}^{'}=\displaystyle\frac{1}{x}=\displaystyle\frac{1}{c_{k+1}}$$
and if $k$ is odd, 
$$\displaystyle\frac{1}{c_{k-1}}+c_{k}^{'}=\displaystyle\frac{k-1}{2}+1=\displaystyle\frac{k+1}{2}=\displaystyle\frac{1}{c_{k+1}}.$$
This implies $\beta_{k}=\displaystyle\frac{(-1)^{k}}{c_{k+1}}x.$ These completes a proof of Theorem.
\end{proof}
\vspace{5mm}
Now we are ready to give the second proof of Theorem 5.2.
\begin{proof}[The second proof]
By Theorem 5.4, we obtain that for any positive integer $m$
$$f_{m}^{'}(x)=(-1)^{m+1}e^{-x}\displaystyle\frac{\beta_{m}(x)}{xQ_{m}^{2}(x)}=
\begin{cases}
-e^{-x}\displaystyle\frac{k}{Q_{2k-1}^{2}(x)}<0\ \ \ \ \mbox{if}\ m=2k-1,\\ \\
-e^{-x}\displaystyle\frac{1}{xQ_{2k}^{2}(x)}<0\ \ \ \ \ \mbox{if}\ m=2k.
\end{cases}
$$
By applying Lemma 5.3 to $f_{m}(x)$, we see that for any positive integer $m$,
$$f_{m}(x)=(-1)^{m}\left(e^{-x}F(x)-e^{-x}\displaystyle\frac{P_{m}(x)}{Q_{m}(x)}\right)>0.$$
Therefore, we have that for any positive integer $k$,
 $$\displaystyle\frac{P_{2k}(x)}{Q_{2k}(x)}<F(x)<\displaystyle\frac{P_{2k-1}(x)}{Q_{2k-1}(x)}.$$
 \end{proof}

\section{The relationship between $Q_{m}(x)$ and the Laguerre polynomial}
\ \ \ \ We now consider Laguerre polynomials. The classical Laguerre polynomial $L^{(\alpha)}_{k}(x)$ is given by
$$L^{(\alpha)}_{k}(x):=\sum_{i=0}^{k} \displaystyle\frac{{}_{k+\alpha} C _{k-i}}{i!} (-x)^{i}\ \ \ \ (k=0,1,2,\cdots)$$
for any real numbers $\alpha$ and $x$, where ${}_{k+\alpha} C _{k-i}$ is defined by
$$\displaystyle\frac{1}{(k-i)!}\displaystyle\prod_{j=0}^{k-i-1}(k+\alpha-j)$$
if $i<k$ and 1 if $i=k$. It is well known that the classical Laguerre polynomial $L^{(\alpha)}_{k}(x)$ satisfies the recurrence relation
$$L_{k}^{(\alpha)}(x)-L_{k-1}^{(\alpha)}(x)=L_{k}^{(\alpha-1)}(x).$$
Then we obtain the following lemma.
\begin{lem}
For any integer $k$ greater than or equal to 0, it holds that
$$Q_{2k}(x)=L_{k}^{(0)}(-x),$$
and for any integer $k$ greater than or equal to 1, it holds that
$$Q_{2k-1}(x)=kL_{k}^{(-1)}(-x).$$
\end{lem}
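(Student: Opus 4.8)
The plan is to prove both identities by directly matching the explicit coefficient formulas (4.2) and (4.4) against the definition of $L^{(\alpha)}_k$; the reflection symmetry of the (generalized) binomial coefficients does essentially all the work, so no induction is needed.

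For the even case I would simply expand, using the definition of $L^{(\alpha)}_k$ with $\alpha=0$ and $x\mapsto -x$,
$$L_k^{(0)}(-x)=\sum_{i=0}^{k}\frac{{}_{k}C_{k-i}}{i!}\,x^{i},$$
and then invoke the elementary symmetry ${}_{k}C_{k-i}={}_{k}C_{i}$ to rewrite the right-hand side as $\sum_{i=0}^{k}\frac{{}_{k}C_{i}}{i!}x^{i}$, which is exactly $Q_{2k}(x)$ by (4.2). This settles the first assertion for all $k\ge 0$.

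For the odd case I would first observe that the $i=0$ term of $L_k^{(-1)}(-x)$ vanishes: by the definition of ${}_{k+\alpha}C_{k-i}$, the quantity ${}_{k-1}C_{k}=\frac{1}{k!}\prod_{j=0}^{k-1}(k-1-j)$ contains the factor corresponding to $j=k-1$, hence is $0$ for $k\ge1$. Thus the sum for $L_k^{(-1)}(-x)$ effectively runs over $1\le i\le k$, matching the index range of $Q_{2k-1}$. Next, for $1\le i\le k$ one checks from the same definition, treating the cases $i<k$ and $i=k$ separately, that ${}_{k-1}C_{k-i}={}_{k-1}C_{i-1}$ (the usual reflection identity). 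Re-indexing with $j=i-1$ then gives
$$k\,L_k^{(-1)}(-x)=k\sum_{j=0}^{k-1}\frac{{}_{k-1}C_{j}}{(j+1)!}\,x^{j+1},$$
and comparing with $Q_{2k-1}(x)=\sum_{j=0}^{k-1}\frac{{}_{k}C_{j+1}}{j!}x^{j+1}$ from (4.4), it remains only to verify the scalar identity $k\cdot{}_{k-1}C_{j}=(j+1)\cdot{}_{k}C_{j+1}$ for $0\le j\le k-1$, which is immediate cancellation of factorials.

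No step here is a genuine obstacle; the only points requiring care are the degenerate value ${}_{k-1}C_{k}=0$ (needed so that the ranges of summation agree) and keeping the two branches of the definition of ${}_{k+\alpha}C_{k-i}$ straight while establishing the reflection identity. As an aside, an inductive proof is also possible, using the recurrences $Q_{2k}=\frac{1}{k}Q_{2k-1}+Q_{2k-2}$ and $Q_{2k-1}=xQ_{2k-2}+Q_{2k-3}$ together with the stated Laguerre recurrence $L_k^{(\alpha)}-L_{k-1}^{(\alpha)}=L_k^{(\alpha-1)}$: the even-case step then follows directly, but the odd-case step forces one to reprove the three-term recurrence for $L_k^{(0)}$, so the direct comparison above is the cleaner route.
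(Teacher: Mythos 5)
Your proof is correct, and the even case coincides exactly with the paper's (the one-line symmetry ${}_{k}C_{k-i}={}_{k}C_{i}$ applied to (4.2)). For the odd case, however, you take a genuinely different route. The paper never touches the generalized coefficients ${}_{k-1}C_{k-i}$ directly: instead it verifies by an explicit binomial computation that $k\left(Q_{2k}(x)-Q_{2k-2}(x)\right)=Q_{2k-1}(x)$, using ${}_{k}C_{i}-{}_{k-1}C_{i}={}_{k-1}C_{i-1}$ and $\frac{k}{i}\,{}_{k-1}C_{i-1}={}_{k}C_{i}$, and then invokes the stated Laguerre recurrence $L_{k}^{(\alpha)}-L_{k-1}^{(\alpha)}=L_{k}^{(\alpha-1)}$ together with the already-proved even case to conclude $Q_{2k-1}(x)=k\,L_{k}^{(-1)}(-x)$. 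You instead expand $L_{k}^{(-1)}(-x)$ from the definition, note the vanishing of the $i=0$ term via ${}_{k-1}C_{k}=0$, establish the reflection identity ${}_{k-1}C_{k-i}={}_{k-1}C_{i-1}$ (checking both branches of the definition), and finish with the scalar identity $k\cdot{}_{k-1}C_{j}=(j+1)\cdot{}_{k}C_{j+1}$. Your version is self-contained --- it does not rely on the Laguerre recurrence, which the paper merely quotes as well known --- at the cost of handling the slightly delicate degenerate coefficient and the two-branch definition of ${}_{k+\alpha}C_{k-i}$; the paper's version keeps all manipulations inside ordinary binomial coefficients but imports an external identity. Both are complete proofs, and your closing aside correctly identifies why the paper's route via the recurrence is natural here.
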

\begin{proof}
We easily see that 
$$Q_{2k}(x)=\sum_{i=0}^{k} \displaystyle\frac{{}_{k} C _{i}}{i!}x^{i}=\sum_{i=0}^{k} \displaystyle\frac{{}_{k} C _{k-i}}{i!}x^{i}=L_{k}^{(0)}(-x).$$
Here, we see that
\begin{eqnarray*}
k(Q_{2k}(x)-Q_{2k-2}(x))&=&k\left(\sum_{i=0}^{k} \displaystyle\frac{{}_{k} C _{i}}{i!}x^{i}-\sum_{i=0}^{k-1} \displaystyle\frac{{}_{k-1} C _{i}}{i!}x^{i}\right)\\ \\
&=&k\left(\displaystyle\frac{x^{k}}{k!}+\sum_{i=1}^{k-1} \displaystyle\frac{1}{i!}({}_{k} C _{i}-{}_{k-1} C _{i})x^{i}\right)\\ \\
&=&k\left(\displaystyle\frac{x^{k}}{k!}+\sum_{i=1}^{k-1} \displaystyle\frac{{}_{k-1} C _{i-1}}{i!}x^{i}\right)\\ \\
&=&k\sum_{i=1}^{k}\displaystyle\frac{{}_{k-1} C _{i-1}}{i!}x^{i}\\ \\
&=&\sum_{i=1}^{k} \displaystyle\frac{k}{i} {}_{k-1} C_{i-1} \displaystyle\frac{x^{i}}{(i-1)!}\\ \\
&=&\sum_{i=0}^{k-1} \displaystyle\frac{{}_{k} C _{i+1}}{i!}x^{i+1}=Q_{2k-1}(x),
\end{eqnarray*}
where we use properties of the binomial coefficient ${}_{k} C _{i}-{}_{k-1} C _{i}={}_{k-1} C _{i-1}$ and $\displaystyle\frac{k}{i}{}_{k-1} C _{i-1}={}_{k} C _{i}$. Therefore, by the recurrence relation of $L_{k}^{(\alpha)}$, we have that
$$Q_{2k-1}(x)=kL_{k}^{(-1)}(-x).$$
\end{proof}
\vspace{5mm}
By \cite[Lemma 2.1]{b2}, we see that
$$L_{k}^{(\alpha)}(x)=\displaystyle\frac{1}{2\sqrt{\pi}}e^{\frac{x}{2}}(-x)^{-\frac{\alpha}{2}-\frac{1}{4}}\cdot k^{\frac{\alpha}{2}-\frac{1}{4}}\cdot e^{2\sqrt{-kx}}\left(1+\mathcal{O}\left(\displaystyle\frac{1}{\sqrt{k}}\right)\right)\ \ \ \ (k\to \infty).$$
Hence we obtain that
$$Q_{2k}(x)=L^{(0)}_{k}(-x)=\displaystyle\frac{1}{2\sqrt{\pi}}e^{2\sqrt{kx}-\frac{x}{2}}(kx)^{-\frac{1}{4}}\left(1+\mathcal{O}\left(\displaystyle\frac{1}{\sqrt{k}}\right)\right)\ \ \ \ (k\to \infty)$$
and
$$Q_{2k-1}(x)=kL^{(-1)}_{k}(-x)=\displaystyle\frac{1}{2\sqrt{\pi}}e^{2\sqrt{kx}-\frac{x}{2}}(kx)^{\frac{1}{4}}\left(1+\mathcal{O}\left(\displaystyle\frac{1}{\sqrt{k}}\right)\right)\ \ \ \ (k\to \infty).$$
Then we obtain the following Proposition.
\begin{prop}
We have the following aymptotic relation between $Q_{2k}(x)$ and $Q_{2k-1}(x)$:
$$Q_{2k-1}(x)\sim \sqrt{kx}Q_{2k}(x)\ \ \ \ (k\to \infty).$$
\end{prop}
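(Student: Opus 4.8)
The plan is to obtain the relation by simply dividing the two asymptotic expansions already derived from \cite[Lemma 2.1]{b2} together with Lemma 6.1. Both expansions carry the common factor $\frac{1}{2\sqrt{\pi}}e^{2\sqrt{kx}-\frac{x}{2}}$, so in the quotient $Q_{2k-1}(x)/Q_{2k}(x)$ this factor — and with it all of the exponential growth — cancels, leaving only a ratio of powers of $kx$ and of the two error factors.

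Concretely, for a fixed real number $x>0$ I would write
$$\frac{Q_{2k-1}(x)}{Q_{2k}(x)}=\frac{(kx)^{\frac{1}{4}}\bigl(1+\mathcal{O}(1/\sqrt{k})\bigr)}{(kx)^{-\frac{1}{4}}\bigl(1+\mathcal{O}(1/\sqrt{k})\bigr)}=\sqrt{kx}\cdot\frac{1+\mathcal{O}(1/\sqrt{k})}{1+\mathcal{O}(1/\sqrt{k})},$$
and then note that, since the denominator $1+\mathcal{O}(1/\sqrt{k})$ is bounded away from $0$ for all sufficiently large $k$, the quotient of the two error factors tends to $1$ as $k\to\infty$. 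Dividing through by $\sqrt{kx}$ gives $Q_{2k-1}(x)/\bigl(\sqrt{kx}\,Q_{2k}(x)\bigr)\to 1$, which is precisely the assertion $Q_{2k-1}(x)\sim\sqrt{kx}\,Q_{2k}(x)$.

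Because the argument reduces to the two one-line asymptotics established just above, there is essentially no genuine obstacle here; the only point calling for a word of justification is the passage $\frac{1+\mathcal{O}(1/\sqrt{k})}{1+\mathcal{O}(1/\sqrt{k})}\to 1$, which is legitimate exactly because the denominator does not vanish for large $k$. One could in principle try to bypass \cite{b2} by working directly with the explicit sums (4.2) and (4.4), comparing $\sum_{i=0}^{k}\frac{{}_{k} C _{i}}{i!}x^{i}$ with $\sum_{i=0}^{k-1}\frac{{}_{k} C _{i+1}}{i!}x^{i+1}$ term by term; but locating where the mass of these sums concentrates as $k\to\infty$ is itself equivalent to the Laguerre asymptotic, so this route would be strictly harder and I would not take it.
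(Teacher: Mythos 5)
Your proposal is correct and is exactly the paper's route: the paper derives the two asymptotics for $Q_{2k}(x)=L_{k}^{(0)}(-x)$ and $Q_{2k-1}(x)=kL_{k}^{(-1)}(-x)$ from \cite[Lemma 2.1]{b2} via Lemma 6.1 and then obtains the Proposition by taking the quotient, in which the common factor $\frac{1}{2\sqrt{\pi}}e^{2\sqrt{kx}-\frac{x}{2}}$ cancels and the error factors tend to $1$. The paper leaves this division step implicit, so your write-up merely makes explicit what the paper asserts without further comment.
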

\section{An application to irrationality of Euler-Gompertz constant}
\ \ \ \ Euler-Gompertz constant $\delta$ is defined by $\displaystyle\int_{0}^{\infty} \displaystyle\frac{e^{-t}}{t+1}dt$. As far as we know, irrationality of $\delta$ is still an open problem. We see that $\delta=F(1)$, so for any integer $k\ge 1$,
$$\displaystyle\frac{P_{2k}(1)}{Q_{2k}(1)}<\delta<\displaystyle\frac{P_{2k-1}(1)}{Q_{2k-1}(1)}.$$
In the following for the sake of simplicity, we denote $P_{m}(1)$ and $Q_{m}(1)$ by $p_{m}$ and $q_{m}$, respectively. Then we obtain the following inequality:
$$\left|\delta-\displaystyle\frac{p_{2k}}{q_{2k}}\right|< \displaystyle\frac{1}{q_{2k}q_{2k-1}}.$$
From equalities (4.1), (4.2), (4.3), and (4.4), we easily see that $\left[\displaystyle\frac{m}{2}\right]!p_{m}$ and $\left[\displaystyle\frac{m}{2}\right]!q_{m}$ are positive integer. Then multiplying both sides by $k!q_{2k}$, we have that
$$\left|k!q_{2k}\delta-k!p_{2k}\right|<\displaystyle\frac{k!}{q_{2k-1}}.$$
Hence if the right hand side approaches 0 as $k$ approaches infinity, we have that Euler-Gompertz constant $\delta$ is irrational, but unfortunately the right hand side approaches infinity as $k$ approaches infinity, since 
$$q_{2k-1}=\displaystyle\sum_{i=0}^{k-1}\displaystyle\frac{ {}_{k} C _{i+1}}{i!}<2^{k}.$$
(A numerical calculation suggests that $\left|k!q_{2k}\delta-k!p_{2k}\right|$ diverges to infinity). So we consider integer parts of $p_{m}$ and $q_{m}$. Then we obtain the following theorem.
\begin{thm}
We have that
$$\lim_{m\to \infty} \displaystyle\frac{[p_{m}]}{[q_{m}]}=\delta.$$
\end{thm}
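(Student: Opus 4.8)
The plan is to compare $[p_m]/[q_m]$ with the convergent $p_m/q_m$, exploiting that passing to integer parts perturbs each quantity by less than $1$ while the denominators $q_m$ tend to infinity.

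First I would write $p_m=[p_m]+\{p_m\}$ and $q_m=[q_m]+\{q_m\}$ with $\{p_m\},\{q_m\}\in[0,1)$, so that
$$\frac{[p_m]}{[q_m]}-\delta=\frac{[p_m]-\delta[q_m]}{[q_m]}=\frac{(p_m-\delta q_m)-(\{p_m\}-\delta\{q_m\})}{[q_m]}.$$
The numerator is governed by two elementary bounds. By Lemma 5.1 together with Theorem 5.2 we have $F_{m+1}(1)>0$ for every $m$, so the error estimate recorded in Section 3 (equivalently, the one stated at the end of the Introduction) applies at $x=1$ and gives $|p_m-\delta q_m|=q_m\bigl|\delta-p_m/q_m\bigr|<1/q_{m-1}$; in particular $p_m-\delta q_m\to 0$. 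On the other hand, trivially $|\{p_m\}-\delta\{q_m\}|<1+\delta$. Hence $|[p_m]-\delta[q_m]|<1/q_{m-1}+1+\delta$, a bound uniform in $m$.

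It remains to control the denominator, and here I would invoke Theorem 4.1 at $x=1$: it gives $q_{2k}=Q_{2k}(1)\ge 2^{[\sqrt k]}$ and $q_{2k-1}=Q_{2k-1}(1)\ge 2^{[(-1+\sqrt{1+4k})/2]}$, so $q_m\to\infty$ and therefore $[q_m]\ge q_m-1\to\infty$; in particular $[q_m]>0$ once $m$ is large, so the quotient is well defined. Dividing the bounded numerator by $[q_m]\to\infty$ yields $[p_m]/[q_m]-\delta\to 0$, i.e.\ $\lim_{m\to\infty}[p_m]/[q_m]=\delta$.

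I do not anticipate a genuine obstacle. The proof amounts to assembling three facts already available: $q_m\to\infty$ (Theorem 4.1); the truncation error $|p_m-\delta q_m|$ stays bounded — indeed tends to $0$ — which rests on the positivity $F_{m+1}(1)>0$ from Section 5; and the observation that replacing $p_m,q_m$ by their integer parts shifts the numerator of $[p_m]/[q_m]-\delta$ by at most the fixed constant $1+\delta$ while the denominator still diverges. The only point to be a little careful about is that the error bound and the lower bounds for $q_m$ are genuinely available for all relevant indices (here $m\ge1$, so $q_{m-1}\ge q_0=1>0$), which is the case.
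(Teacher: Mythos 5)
Your argument is correct and rests on the same elementary mechanism as the paper's proof: replacing $p_m$, $q_m$ by their integer parts perturbs things by less than $1$, while $q_m\to\infty$ (Theorem 4.1) makes the perturbation negligible. The paper phrases this as the squeeze $\frac{p_m-1}{q_m}<\frac{[p_m]}{[q_m]}<\frac{p_m}{q_m-1}$ and needs only $p_m/q_m\to\delta$, whereas you bound the numerator of $\frac{[p_m]-\delta[q_m]}{[q_m]}$ via the quantitative estimate $|p_m-\delta q_m|<1/q_{m-1}$; this is a minor variant, not a genuinely different route, and both are complete.
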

\begin{proof}
We easily see the following inequalities:
$$p_{m}-1<[p_{m}]\le p_{m},\ \ \ \ q_{m}-1<[q_{m}]\le q_{m}.$$
Therefore, we have that 
$$\displaystyle\frac{p_{m}-1}{q_{m}}<\displaystyle\frac{[p_{m}]}{[q_{m}]}<\displaystyle\frac{p_{m}}{q_{m}-1}.$$
Since the leftmost side equals $\displaystyle\frac{p_{m}}{q_{m}}-\displaystyle\frac{1}{q_{m}}$, we obtain that the leftmost side approches $\delta$ as $m$ approches infinity. Similarly since the rightmost side equals $\displaystyle\frac{p_{m}}{q_{m}}\cdot \displaystyle\frac{q_{m}}{q_{m}-1}$, we have that the rightmost side approches $\delta$ as $m$ approches infinity. Therefore, we see that
$$\lim_{m\to \infty} \displaystyle\frac{[p_{m}]}{[q_{m}]}=\delta.$$
\end{proof}
\vspace{5mm}
We investigate the behavior of $\delta[q_{m}]-[p_{m}]$ as $m$ approches infinity. We have that
$$\delta [q_{m}]-[p_{m}]=\delta q_{m}-p_{m}-(\delta \{q_{m}\}-\{p_{m}\}),$$
where we denote the fractional part of $p_{m}$ (resp. $q_{m}$) by $\{p_{m}\}$ (resp. $\{q_{m}\}$). Since the behavior of $\delta q_{m}-p_{m}$ is well known, our purpose is reduced to investigating that of $\delta \{q_{m}\}-\{p_{m}\}$. Using Mathematica, we have the following figure with respect to the distribution of $\delta \{q_{m}\}-\{p_{m}\}$ for $1\le m\le10000$.
\newpage
\begin{figure}[htbp]
\begin{center}
\includegraphics[width=0.7\linewidth]{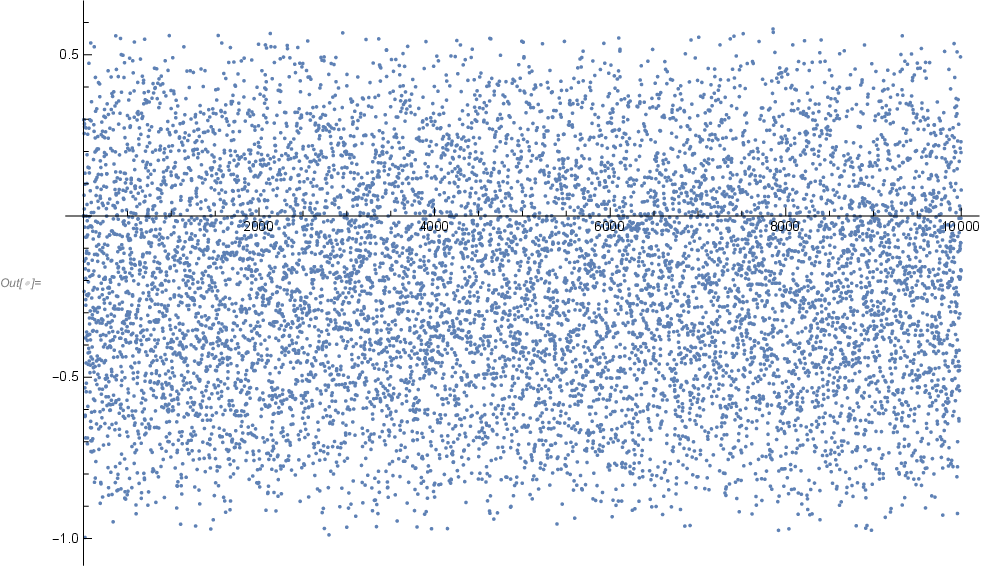}
\end{center}
\end{figure}
From this we read that they are distributed with uniformly except around both ends and can not find any rules that would be satisfied by them. But since the sequence $\delta \{q_{m}\}-\{p_{m}\}\ (m=1,2,3,\cdots)$ is bounded, we can take a convergent subsequence, i.e., positive integers $m_{i}\ \ (i=1,2,3,\cdots)$ and a real number $\alpha$ such that $1\le m_{1}<m_{2}<m_{3}<\cdots<m_{n}<\cdots$ and 
$$\delta\{q_{m_{i}}\}-\{p_{m_{i}}\}\to \alpha\ \ \ \ (i\to \infty).$$
Since $\delta q_{m_{i}}-p_{m_{i}}\to 0\ \ (i\to \infty)$, then we obtain that
$$\delta[q_{m_{i}}]-[p_{m_{i}}]=\delta q_{m_{i}}-p_{m_{i}}-(\delta \{q_{m_{i}}\}-\{p_{m_{i}}\})\to -\alpha\ \ \ \ (i\to \infty).$$
The sequences $p_{m}$ and $q_{m}$ $(m=1,2,3,\cdots)$ are strictly monotonic incresings. Therefore, taking a subsequence of $m_{i}$ $(i=1,2,3,\cdots)$ (which will be denoted by the same symbol), we may assume that the sequences $[p_{m_{i}}]$ and $[q_{m_{i}}]$ $(i=1,2,3,\cdots)$ are strictly monotonic increasings.
Here we put
$$A_{i}:=[p_{m_{i+1}}]-[p_{m_{i}}],\ \ \ \ B_{i}:=[q_{m_{i+1}}]-[q_{m_{i}}]$$
for any integer $i\ge 1$. Hence we obtain that
$$\delta B_{i}-A_{i}=\delta[q_{m_{i+1}}]-[p_{m_{i+1}}]-(\delta[q_{m_{i}}]-[p_{m_{i}}])\to -\alpha+\alpha=0\ \ \ \ (i\to \infty).$$
If $\delta \neq \displaystyle\frac{A_{i}}{B_{i}}$ for any positive integer $i$, we see that Euler-Gompertz constant is irrational by the following theorem.
\begin{thm}[{\cite[Theorem 1.5]{a1}}]
Let $\alpha$ be a real number. We assume that there exists a sequence $\displaystyle\frac{P_{n}}{Q_{n}}$ of rational numbers (where $P_{n}$ and $Q_{n}$ are integers and not necessary to be coprime each other) satisfying
$$0<\left|\alpha-\displaystyle\frac{P_{n}}{Q_{n}}\right|<\displaystyle\frac{\varepsilon(n)}{Q_{n}}$$
for any positive integer $n$, with $\displaystyle\lim_{n\to \infty}\varepsilon(n)=0$. Then, a real number $\alpha$ is irrational.
\end{thm}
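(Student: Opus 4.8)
The plan is to argue by contradiction, mimicking the classical proof that a real number admitting ``too good'' rational approximations cannot itself be rational. Suppose $\alpha$ were rational, and write $\alpha=a/b$ with $a\in\mathbb{Z}$ and $b$ a positive integer. First I would rewrite the approximation error over a common denominator:
$$\alpha-\frac{P_{n}}{Q_{n}}=\frac{aQ_{n}-bP_{n}}{bQ_{n}}.$$
Since $P_{n}/Q_{n}$ is a rational number we have $Q_{n}\neq 0$, so $|Q_{n}|\ge 1$, and $aQ_{n}-bP_{n}$ is an integer. The left-hand strict inequality $0<|\alpha-P_{n}/Q_{n}|$ says precisely that this integer is nonzero, hence $|aQ_{n}-bP_{n}|\ge 1$.

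The next step is to convert this into the lower bound
$$\left|\alpha-\frac{P_{n}}{Q_{n}}\right|=\frac{|aQ_{n}-bP_{n}|}{b\,|Q_{n}|}\ge\frac{1}{b\,|Q_{n}|},$$
valid for every positive integer $n$. Combining it with the hypothesis $|\alpha-P_{n}/Q_{n}|<\varepsilon(n)/|Q_{n}|$ and multiplying through by the positive quantity $|Q_{n}|$, I obtain $1/b<\varepsilon(n)$ for all $n$. Letting $n\to\infty$ and using $\lim_{n\to\infty}\varepsilon(n)=0$ yields $1/b\le 0$, which contradicts $b\ge 1$. Therefore $\alpha$ must be irrational.

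As for where the difficulty lies: there is essentially no serious obstacle; the argument is short and elementary, and the main content is merely organizing it carefully. The only points deserving a word of care are the integrality of $aQ_{n}-bP_{n}$ (which is exactly why the statement insists that $P_{n},Q_{n}$ be integers, not merely that $P_{n}/Q_{n}$ be rational) and the reading of $\varepsilon(n)/Q_{n}$, which should be interpreted as $\varepsilon(n)/|Q_{n}|$; in the application in Section 7 the denominators $B_{i}$ are eventually positive, so this causes no trouble. It is also worth emphasizing that the strong form ``$<\varepsilon(n)/Q_{n}$'' is exactly what makes the criterion usable here: a single power of $Q_{n}$ in the denominator already suffices, provided the extra factor $\varepsilon(n)$ tends to $0$, which is the situation produced by the sequence $\delta B_{i}-A_{i}\to 0$ constructed above.
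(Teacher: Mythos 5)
Your proof is correct. Note that the paper does not prove this statement at all: it is quoted verbatim as \cite[Theorem 1.5]{a1} and used as a black box, so there is no in-paper argument to compare against. The contradiction you give --- writing $\alpha=a/b$, observing that $aQ_{n}-bP_{n}$ is a nonzero integer so $\left|\alpha-\frac{P_{n}}{Q_{n}}\right|\ge\frac{1}{b\,|Q_{n}|}$, and then forcing $\frac{1}{b}<\varepsilon(n)\to 0$ --- is the standard and essentially unique elementary proof of this irrationality criterion, and your side remarks (integrality of $aQ_{n}-bP_{n}$ as the reason the theorem demands integer $P_{n},Q_{n}$, and positivity of the denominators $B_{i}$ in the Section 7 application) correctly identify the only points where care is needed.
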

\vspace{5mm}
If $\delta \neq \displaystyle\frac{A_{i}}{B_{i}}$ for any positive integer $i$, we have that 
$$0<\left|\delta-\displaystyle\frac{A_{i}}{B_{i}}\right|B_{i}.$$
Since the sequence $\delta B_{i}-A_{i}$ approaches 0 as $i$ approaches infinity, Theorem 7.2 implies that Euler-Gompertz constant $\delta$ is irrational.\\
\ \ \ \ Lastly, we introduce a sufficient condition for Euler-Gompertz constant $\delta$ to be irrational. For this, we prepare the following lemma.
\begin{lem}
For any positive integer $k$, it holds that
$$\delta q_{2k}-p_{2k}<\delta q_{2k-2}-p_{2k-2}$$
and
$$\delta q_{2k+1}-p_{2k+1}>\delta q_{2k-1}-p_{2k-1}.$$
\end{lem}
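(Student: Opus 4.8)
The plan is to pass to the single sequence $D_m := \delta q_m - p_m = F(1)Q_m(1) - P_m(1)$ and exploit the fact that it satisfies the same three-term recurrence as $p_m$ and $q_m$. Concretely, from the recurrences $P_m(x) = c_m P_{m-1}(x) + P_{m-2}(x)$ and $Q_m(x) = c_m Q_{m-1}(x) + Q_{m-2}(x)$ of Section 3, forming the combination $F(1)Q_m(1) - P_m(1)$ and specializing at $x=1$ (where $c_m = 1$ for odd $m$ and $c_m = 2/m$ for even $m$) yields, for all $m \ge 2$,
$$D_m = c_m D_{m-1} + D_{m-2}, \qquad c_m = \begin{cases} 1 & m\ \mathrm{odd},\\[1mm] 2/m & m\ \mathrm{even}.\end{cases}$$

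Next I would record the sign information, which is exactly Theorem 5.2 (condition $(i)$) evaluated at $x = 1$: since $q_m = Q_m(1) > 0$, the inequalities $\tfrac{p_{2k}}{q_{2k}} < \delta < \tfrac{p_{2k-1}}{q_{2k-1}}$ give $D_{2k} > 0$ and $D_{2k-1} < 0$ for every $k \ge 1$. The two claimed inequalities then drop out as two instances of the recurrence. For the even case, $D_{2k} = c_{2k}D_{2k-1} + D_{2k-2}$ gives $D_{2k} - D_{2k-2} = \tfrac{1}{k}D_{2k-1} < 0$, which is precisely $\delta q_{2k} - p_{2k} < \delta q_{2k-2} - p_{2k-2}$. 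For the odd case, $c_{2k+1} = 1$, so $D_{2k+1} = D_{2k} + D_{2k-1}$, hence $D_{2k+1} - D_{2k-1} = D_{2k} > 0$, which is $\delta q_{2k+1} - p_{2k+1} > \delta q_{2k-1} - p_{2k-1}$.

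There is no genuinely hard step here once Theorem 5.2 is available; the computation is essentially a one-line consequence of the recurrences. The only points that warrant a line of checking are that the linear combination $D_m$ really inherits the recurrence (immediate, since $F(1)$ is a constant), and that the base case $k=1$ lies in the range where the sign information applies, which it does: $D_1 = \delta - 1 < 0$ and $D_2 = 2\delta - 1 > 0$. So the main ``obstacle'' is merely the bookkeeping of indices and the verification that the hypothesis of Theorem 5.2 has already been established, rather than any analytic difficulty.
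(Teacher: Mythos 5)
Your proposal is correct and follows essentially the same route as the paper: both use the recurrence at $x=1$ (so $c_{2k}=1/k$, $c_{2k+1}=1$) together with the sign information $\delta q_{2k-1}-p_{2k-1}<0$ and $\delta q_{2k}-p_{2k}>0$ from Theorem 5.2. Your formulation in terms of $D_m=\delta q_m-p_m$ is just a cleaner packaging of the identical computation.
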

\begin{proof}
By the inequality $\delta<\displaystyle\frac{p_{2k-1}}{q_{2k-1}}$, we see that $\delta q_{2k-1}-p_{2k-1}<0$. Since $p_{2k}=\displaystyle\frac{1}{k}p_{2k-1}+p_{2k-2}$ and $q_{2k}=\displaystyle\frac{1}{k}q_{2k-1}+q_{2k-2}$. Then we obtain that
$$\delta(kq_{2k}-kq_{2k-2})-(kp_{2k}-kp_{2k-2})<0.$$
Therefore, we have that
$$\delta q_{2k}-p_{2k}<\delta q_{2k-2}-p_{2k-2}$$
for any positive integer $k$.\\
\ \ \ \ Similarly, we obtain the second inequality.
\end{proof}
\vspace{5mm}
Then we can state the following theorem about irrationality of $\delta$.
\begin{thm}
If there exists a subsequence of the sequence $\delta\{q_{2k}\}-\{p_{2k}\}\ (k=1,2,3,\cdots)$ which is monotonic non-decrease or that of the sequence $\delta\{q_{2k-1}\}-\{p_{2k-1}\}\ (k=1,2,3,\cdots)$ which is monotonic non-increase, then Euler-Gompertz constant $\delta$ is irrational.
\end{thm}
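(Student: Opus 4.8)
The plan is to reduce Theorem 7.4 to the implication already obtained just after Theorem 7.2: if one can exhibit integers $A_{i},B_{i}$ with $B_{i}>0$, with $\delta B_{i}-A_{i}\to 0$, and with $\delta\neq A_{i}/B_{i}$ for every $i$, then $\delta$ is irrational. So the entire task is to verify the last, nonvanishing, condition under the monotonicity hypothesis; everything else is just a repetition of the construction preceding Theorem 7.2, applied to a conveniently chosen convergent subsequence.

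I would first treat the ``even'' alternative. Suppose $k_{1}<k_{2}<\cdots$ is a subsequence of indices along which $\delta\{q_{2k_{i}}\}-\{p_{2k_{i}}\}$ is monotonic non-decreasing. Since $\{p_{m}\},\{q_{m}\}\in[0,1)$, this sequence is bounded, hence (being monotonic) converges to some $\alpha$. Because $p_{2k},q_{2k}\to\infty$ strictly monotonically, the integer sequences $[p_{2k}],[q_{2k}]$ are unbounded, so I can pass to a further subsequence — still written $k_{i}$, and still non-decreasing for the fractional-part sequence, since a subsequence of a monotonic sequence is monotonic — along which $[p_{2k_{i}}]$ and $[q_{2k_{i}}]$ are strictly increasing. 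Put $m_{i}:=2k_{i}$, $A_{i}:=[p_{m_{i+1}}]-[p_{m_{i}}]$, $B_{i}:=[q_{m_{i+1}}]-[q_{m_{i}}]$; then $B_{i}>0$, and, exactly as in the text, $\delta q_{m_{i}}-p_{m_{i}}\to 0$ together with $\delta\{q_{m_{i}}\}-\{p_{m_{i}}\}\to\alpha$ gives $\delta[q_{m_{i}}]-[p_{m_{i}}]\to-\alpha$, and hence $\delta B_{i}-A_{i}\to 0$.

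The decisive step is to determine the sign of $\delta B_{i}-A_{i}$. I would write
$$\delta B_{i}-A_{i}=\big[(\delta q_{m_{i+1}}-p_{m_{i+1}})-(\delta q_{m_{i}}-p_{m_{i}})\big]-\big[(\delta\{q_{m_{i+1}}\}-\{p_{m_{i+1}}\})-(\delta\{q_{m_{i}}\}-\{p_{m_{i}}\})\big].$$
Iterating the first inequality of Lemma 7.3 ($\delta q_{2k}-p_{2k}$ strictly decreasing in $k$) from $k_{i+1}$ down to $k_{i}$ shows the first bracket is strictly negative, while the non-decreasing hypothesis makes the second bracket $\ge 0$. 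Hence $\delta B_{i}-A_{i}<0$, in particular $\neq 0$, so $\delta\neq A_{i}/B_{i}$ for all $i$, and the implication recorded after Theorem 7.2 concludes this case. The ``odd'' alternative is the mirror image: take $m_{i}=2k_{i}-1$, use that $\delta q_{2k-1}-p_{2k-1}$ is strictly increasing in $k$ (second inequality of Lemma 7.3) to make the first bracket strictly positive, and use the non-increasing hypothesis to make the second bracket $\le 0$, so that $\delta B_{i}-A_{i}>0$.

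The main obstacle — really the only nonroutine point — is noticing that Lemma 7.3 and the assumed monotonicity of $\delta\{q_{m}\}-\{p_{m}\}$ push the two brackets in opposite directions, so that their difference has a forced strict sign; once that is seen, nonvanishing is automatic. The remaining care is purely bookkeeping: checking that extracting the further subsequence making $[p_{m_{i}}],[q_{m_{i}}]$ strictly increasing (possible since these integer sequences are unbounded) does not destroy the monotonicity of the fractional-part sequence, and that $B_{i}>0$ so that $A_{i}/B_{i}$ is a legitimate rational approximation to which Theorem 7.2 applies.
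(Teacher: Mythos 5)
Your proposal is correct and takes essentially the same route as the paper: the same decomposition of $\delta B_{i}-A_{i}$ into the bracket $(\delta q_{m_{i+1}}-p_{m_{i+1}})-(\delta q_{m_{i}}-p_{m_{i}})$ controlled by Lemma 7.3 and the bracket of fractional parts controlled by the monotonicity hypothesis, followed by an appeal to Theorem 7.2. The only cosmetic difference is that you determine the strict sign of $\delta B_{i}-A_{i}$ directly, whereas the paper assumes $\delta=A_{i}/B_{i}$ and derives a contradiction from equation (7.1); the two are logically identical.
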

\begin{proof}
Firstly, suppose that there exists a subsequence $\delta\{q_{2k_{i}}\}-\{p_{2k_{i}}\}\ (i=1,2,3,\cdots)$ which is monotonic non-decrease, i.e., $k_{i}\ (i=1,2,3,\cdots)$ are positive integers with $1\le k_{1}<k_{2}<k_{3}<\cdots<k_{n}<\cdots$ and 
$$\delta\{q_{2k_{1}}\}-\{p_{2k_{1}}\}\le \delta\{q_{2k_{2}}\}-\{p_{2k_{2}}\}\le \delta\{q_{2k_{3}}\}-\{p_{2k_{3}}\}\le \cdots \le \delta\{q_{2k_{n}}\}-\{p_{2k_{n}}\}\le \cdots.$$
Taking a subsequence of $k_{i}\ (i=1,2,3,\cdots)$\ (whcih will be denoted by the same symbol) if necessary, we may assume that the sequences $[p_{2k_{i}}]$ and $[q_{2k_{i}}]$ $(i=1,2,3,\cdots)$ are strictly monotonic increasings. Since the sequence $\delta\{q_{2k_{i}}\}-\{p_{2k_{i}}\}$ converges as $i$ approachs infinity, the above argument implies that
$$\delta B_{i}-A_{i}\to 0\ \ \ \ (i\to \infty),$$
where $A_{i}$ and $B_{i}$ are the integers defined as above.\\
\ \ \ \ We assume that there exists a positive integer $i$ such that $\delta=\displaystyle\frac{A_{i}}{B_{i}}$. Then we have that
$$\delta [q_{2k_{i+1}}]-[p_{2k_{i+1}}]-(\delta [q_{2k_{i}}]-[p_{2k_{i}}])=0.$$
This is equivalent to
$$\delta q_{2k_{i+1}}-p_{2k_{i+1}}-(\delta q_{2k_{i}}-p_{2k_{i}})=\delta \{q_{2k_{i+1}}\}-\{p_{2k_{i+1}}\}-(\delta \{q_{2k_{i}}\}-\{p_{2k_{i}}\}).\eqno(7.1)$$
By Lemma 7.3, the left hand side of (7.1) is negative, but by the assumption, the right hand side of (7.1) is greater than or equal to zero, which is a contradiction. By Theorem 7.2, we have that Euler-Gompertz constant $\delta$ is irrational.\\
\ \ \ \ In the second case, we similarly get the claim.
\end{proof}
\vspace{5mm}
\newpage
Using Mathematica, we have the following figure with respect to the distribution of $\delta\{q_{2k}\}-\{p_{2k}\}$ for $1\le k\le 5000$.

\begin{figure}[htbp]
\begin{center}
\includegraphics[width=0.7\linewidth]{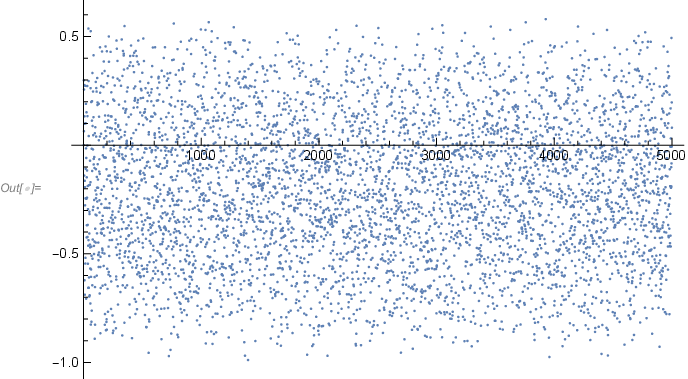}
\end{center}
\end{figure}
From this we see that the distribution is close to a uniform distribution (except around both ends). Therefore, we can expect existance of a desired subsequence. A similar observation is made in the sequence $\delta\{q_{2k-1}\}-\{p_{2k-1}\}\ (k=1,2,3,\cdots)$.
\begin{con}
There exists a subsequences of the sequence $\delta\{q_{2k}\}-\{p_{2k}\}\ (k=1,2,3,\cdots)$ which is monotonic non-decrease and that of the sequence $\delta\{q_{2k-1}\}-\{p_{2k-1}\}\ (k=1,2,3,\cdots)$ which is monotonic non-increase.
\end{con}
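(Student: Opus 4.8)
The plan is to reduce this existence statement to a density (equidistribution) property of the two parity sequences and then to attack that property by Weyl's criterion, using the explicit arithmetic of $p_m,q_m$. Write $E_k:=\delta\{q_{2k}\}-\{p_{2k}\}$ and $O_k:=\delta\{q_{2k-1}\}-\{p_{2k-1}\}$. The first step is the following elementary extraction lemma: if a bounded real sequence $(a_k)$ is dense in a non-degenerate interval $[a,b]$ (equivalently $\liminf a_k<\limsup a_k$ together with one-sided accumulation at the endpoints), then $(a_k)$ admits both a non-decreasing subsequence converging to $\limsup a_k$ from below and a non-increasing subsequence converging to $\liminf a_k$ from above. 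Indeed, fixing targets $t_j\uparrow\limsup a_k$ and choosing indices greedily — at each stage picking a later index whose value lies in a shrinking one-sided neighbourhood of the next target, which is possible because that neighbourhood contains infinitely many terms — produces the non-decreasing subsequence, and the non-increasing case is symmetric. Consequently it suffices to prove that $(E_k)_{k\ge1}$ and $(O_k)_{k\ge1}$ are each dense in a non-degenerate interval; this yields in particular the non-decreasing subsequence of $(E_k)$ and the non-increasing subsequence of $(O_k)$ demanded by the statement.

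Next I would recast density in arithmetic terms. Since $[m/2]!\,p_m$ and $[m/2]!\,q_m$ are integers, at $x=1$ we may write $q_{2k}=N_k/k!$ and $p_{2k}=M_k/k!$ with $N_k,M_k\in\mathbb{Z}$, so that $\{q_{2k}\}=(N_k\bmod k!)/k!$ and $\{p_{2k}\}=(M_k\bmod k!)/k!$. Moreover, by the error bound of Section 3 together with the Laguerre asymptotics of Proposition 6.2, $\delta q_{2k}-p_{2k}\to0$ extremely fast, whence $E_k=\delta\{q_{2k}\}-\{p_{2k}\}=-(\delta[q_{2k}]-[p_{2k}])+o(1)$, and $E_k=\delta u_k-v_k$ with $u_k:=\{q_{2k}\}$ and $v_k:=\{p_{2k}\}$ in $[0,1)$. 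Thus density of $(E_k)$ in $(-1,\delta)$ follows from density of the pair $(u_k,v_k)$ in the unit square $[0,1)^2$, its image under the continuous surjection $(u,v)\mapsto\delta u-v$ being that whole interval. The latter I would attack through Weyl's criterion, that is, by showing that the exponential sums $\frac{1}{K}\sum_{k\le K}e^{2\pi i(h(N_k\bmod k!)/k!+h'(M_k\bmod k!)/k!)}$ tend to $0$ for all $(h,h')\in\mathbb{Z}^2\setminus\{(0,0)\}$; the odd sequence $(O_k)$ is handled identically using $q_{2k-1}$ and $p_{2k-1}$.

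The main obstacle is exactly this last equidistribution step: it requires genuine control of the residues $N_k=k!\,q_{2k}$ and $M_k=k!\,p_{2k}$ modulo the moving modulus $k!$, for which the explicit formulas (4.1)--(4.4) and the Laguerre representation of Lemma 6.1 give the needed algebraic description but no apparent cancellation in the associated exponential sums. I expect this to be the crux of the whole problem, morally as deep as the irrationality question itself, and the numerical evidence (the near-uniform distributions exhibited in the two figures) is precisely what makes the density plausible while leaving the cancellation unproven. As a possible simplification on the analytic side, one could combine the monotonicity of $\delta q_{2k}-p_{2k}$ and $\delta q_{2k-1}-p_{2k-1}$ from Lemma 7.3 with partial density information to weaken what must be shown — only one-sided accumulation at a single extreme limit point is truly needed for each parity, namely that $\limsup E_k$ is approached from below and $\liminf O_k$ from above infinitely often, rather than full density — but that residual arithmetic input remains the hard kernel.
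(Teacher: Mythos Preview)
The statement you are addressing is a \emph{Conjecture} in the paper, not a theorem: the authors give no proof, only a scatter plot suggesting near-uniform distribution of $\delta\{q_{2k}\}-\{p_{2k}\}$ and the remark that one can therefore ``expect existence of a desired subsequence.'' There is no paper proof to compare your proposal against.

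As for your proposal itself, it is an honest outline rather than a proof, and you correctly identify its own gap. Your extraction lemma is fine (and, as you note at the end, can be weakened considerably: one-sided accumulation at a single limit point already suffices for each parity). But the entire argument rests on the equidistribution of $(N_k \bmod k!)/k!$ and $(M_k \bmod k!)/k!$ in $[0,1)^2$, for which you provide no mechanism of cancellation in the Weyl sums. This is not a technicality: controlling residues modulo a \emph{moving} factorial modulus is not amenable to the standard Weyl machinery, and you rightly flag that this step is plausibly as deep as the irrationality of $\delta$ itself --- indeed, by Theorem~7.4 the conjecture would directly imply that irrationality. So both you and the authors are in the same position: the statement remains open.
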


\vspace{5mm}
{\bf Address:}  Naoki Murabayashi: Department of Mathematics, Faculty of Engineering Science, Kansai \\
\hspace{10mm}University, 3-3-35, Yamate-cho, Suita-shi, Osaka, 564-8680, Japan.\\
{\bf E-mail:} murabaya@kansai-u.ac.jp \\ \\
{\bf Address:}  Hayato Yoshida: Mathematics, Integrated Science and Engineering Major, Graduate school of Science and\\
\hspace{10mm}Engineering, Kansai University, 3-3-35, Yamate-cho, Suita-shi, Osaka, 564-8680, Japan.\\ 
{\bf E-mail:} k321930@kansai-u.ac.jp
\end{document}